\theoremstyle{plain}
\newtheorem{theorem}{Theorem}[section]
\newtheorem{lemma}[theorem]{Lemma}
\newtheorem{proposition}[theorem]{Proposition}
\theoremstyle{definition}
\renewcommand{\leq}{\leqslant}
\renewcommand{\geq}{\geqslant}\usepackage{amssymb}
\newcommand{\vr}{\varepsilon}
\newcommand{\be}{\begin{equation}}
\newcommand{\ee}{\end{equation}}
\newcommand{\N}{\mathbb N}
\newcommand{\pp}{\ensuremath{(\mathcal{P})}}
\newcommand{\hra}{\hookrightarrow}
\newcommand{\ball}{{\mathrm{B}}}
\newcommand{\as}{a^{(\Pi_p)}}  
\newcommand{\at}{a^{(\Pi_2)}}
\newcommand{\cs}{c^{(\Pi_p)}}
\newcommand{\asa}{a^{(\mathcal{A})}}
\newcommand{\csa}{c^{(\mathcal{A})}}
\newcommand{\ass}{a^{(\Pi_{tr})}}
\newcommand{\css}{c^{(\Pi_{tr})}}
\newcommand{\A}{{\mathcal{A}}}
\renewcommand{\span}{\mathrm{span}}
\def \dim{{\mathrm{dim}} \, }
\def \rank{{\mathrm{rank}} \, }
\def \ker{{\mathrm{ker}} \, }
\def \vr{\varepsilon}
\def \ran{{\mathrm{ran}} \, }
\def \codim{{\mathrm{codim}} \,}
\def \diag{{\mathrm{diag}} \, }
\def \eqalign#1{\null\,\vcenter{\openup\jot 
   \ialign{\strut\hfil$\displaystyle{##}$&$
      \displaystyle{{}##}$\hfil \crcr#1\crcr}}\,}
\begin{document}

\baselineskip=15.5pt

\numberwithin{equation}{section}

\pagestyle{headings}

\title[Rate of decay of $s$-numbers]{Rate of decay of $s$-numbers}

\author[T.~Oikhberg]{T.~Oikhberg}
\address{
Department of Mathematics, University of California - Irvine, Irvine CA 92697,
and
Department of Mathematics, University of Illinois at Urbana-Champaign, Urbana, IL 61801}

\email{toikhber@math.uci.edu}

\keywords{$s$-numbers, operator ideals}


\begin{abstract}
For an operator $T \in B(X,Y)$, we denote by $a_m(T)$, $c_m(T)$, $d_m(T)$, and
$t_m(T)$ its approximation, Gelfand, Kolmogorov, and absolute numbers.
We show that, for any infinite dimensional Banach spaces $X$ and $Y$, and
any sequence $\alpha_m \searrow 0$, there exists $T \in B(X,Y)$ for which
the inequality
$$
3 \alpha_{\lceil m/6 \rceil} \geq a_m(T) \geq \max\{c_m(t), d_m(T)\} \geq
\min\{c_m(t), d_m(T)\} \geq t_m(T) \geq \alpha_m/9
$$
holds for every $m \in \N$. Similar results are obtained for other $s$-scales.
\end{abstract}

\maketitle


\section{Introduction and main results}

In this paper, we investigate the existence of an operator $T \in B(X,Y)$
($X$ and $Y$ are infinite dimensional Banach spaces) whose sequence of
$s$-numbers $(s_n(T))$ ``behaves like'' a prescribed sequence $(\alpha_n)$.

For a linear operator $T$ between Banach spaces $X$ and $Y$,
define its {\it approximation numbers} $a_n$, {\it Kolmogorov numbers} $d_n$,
{\it Gelfand numbers} $c_n$, {\it symmetrized} (or {\it absolute}) {\it numbers}
$t_n$, {\it Weyl numbers} $x_n$, {\it Chang numbers} $y_n$, and {\it Hilbert
numbers} $h_n$:
\begin{equation}
\begin{array}{lll}
a_n(T)
& = &
\inf \{ \|T-S\| : S \in B(X,Y), \, \rank S < n \} , 
\cr
d_n(T) 
& = &
\inf \{ \|q T\| : q : Y \to Y/F \, {\mathrm{quotient~map}}, \, \dim F < n \}
\cr
& = &
\inf \{ a_n(T q) : q : \tilde{X} \to X \, {\mathrm{quotient~map}} \} ,
\cr
c_n(T)
& = &
\inf \{ \|T|_E\| : E \hra X, \, \codim E < n \}
\cr
& = &
\inf \{ a_n(j T) : j : Y \to \tilde{Y} \, {\mathrm{isometry}} \} ,
\cr
t_n(T)
& = &
\inf \{ a_n(j T q) : q : \tilde{X} \to X \, {\mathrm{quotient~map}}, \,
   j : Y \to \tilde{Y} \, {\mathrm{isometry}} \} ,
\cr
x_n(T)
& = &
\inf \{ a_n(T u) : u : \ell_2 \to X , \, \|u\| \leq 1 \} ,
\cr
y_n(T)
& = &
\inf \{ a_n(v T) : v : Y \to \ell_2 , \, \|v\| \leq 1 \} ,
\cr
h_n(T)
& = &
\inf \{ a_n(v T u) : u : \ell_2 \to X , \, v : Y \to \ell_2 ,  \, \|u\| \|v\| \leq 1 \} .
\cr
\end{array}
\label{eq:s_numbers}
\end{equation}
We refer the reader to \cite{CS, PieS} for general information about these and
other $s$-numbers. Note that
$t_n(T) \leq \min\{c_n(T), d_n(T)\} \leq \max\{c_n(T), d_n(T)\} \leq a_n(T)$
for any operator $T$.
We say that an operator $T$ is {\it approximable} if $\lim_n a_n(T) = 0$.
It is well known that $T$ is compact if and only if $\lim_n d_n(T) = 0$ if and only if $\lim_n c_n(T) = 0$.
Any approximable operator is compact, but the converse is not true, due to the
existence of Banach spaces failing the Approximation Property.

Throughout the paper, the notation $\alpha_i \searrow 0$ means that
the sequence $(\alpha_i)$ satisfies $\alpha_1 \geq \alpha_2 \geq \ldots \geq 0$,
and $\lim_i \alpha_i = 0$.

We are motivated by Bernstein's Lethargy Theorem, stating
that, for any Banach space $X$, any strictly increasing chain of finite dimensional subspaces
$X_1 \hra X_2 \hra \ldots \hra X$, and any sequence $\alpha_i \searrow 0$,
there exists $x \in X$ such that $d(x,X_i) = \alpha_i$
for every $i$ (for the proof, see e.g.~\cite[Section II.5.3]{singer}).
This theorem was later generalized
to the more general class of $FS$-spaces \cite{Mi}.
Certain partial results for chains $X_1 \hra X_2 \hra \ldots \hra X$ of
infinite dimensional subspaces of a Banach space $X$ can be found in
\cite[Section I.6.3]{singer}. Related results were obtained for
general approximation schemes in \cite{AO}.


In a similar vein, one can study the existence
of operators whose sequences of $s$-numbers behave in a prescribed fashion.
First results of this kind were obtained in \cite{HMRe}.
Among other things, it was proved that, of every pair of infinite dimensional
Banach spaces $(X, Y)$, and any $\vr > 0$, there exist infinite dimensional
$X_0 \hra X$ and $Y_0 \hra Y$, such that for any sequence $\alpha_i \searrow 0$
there exists $T \in B(X_0, Y_0)$ with the property that
$\alpha_i \leq a_i(T) \leq (1+\vr) \alpha_i$ for every $i$. Furthermore, for many
pairs $(X,Y)$, the existence of $T \in B(X,Y)$ satisfying
$\alpha_i \leq a_i(T) \leq M \alpha_i$ ($M$ is a constant, depending on $(X,Y)$)
is demonstrated. These results were sharpened in \cite{ALe}, where it was shown
that, for a certain class of pairs $(X,Y)$, for any $\alpha_i \searrow 0$
there exists $T \in B(X,Y)$ such that $a_i(T) = \alpha_i$ for every $i$.
One should also mention \cite{KRe}, where operators with prescribed
eigenvalue sequences are constructed.

The main result of this paper is:

\begin{theorem}\label{thm:controlled}
Suppose $X$ and $Y$ are infinite dimensional Banach spaces, and $\alpha_k \searrow 0$.
Then there exists an approximable $T : X \to Y$ such that $\|T\| \leq 2 \alpha_1$,
and, for every $m$,
$3 \alpha_{\lceil m/6 \rceil} \geq a_m(T) \geq t_m(T) \geq \alpha_m/9$,
$\min\{x_m(T), y_m(T)\} \geq \alpha_m/(9\sqrt{m})$,
and $h_m(T) \geq \alpha_m/(9m)$.
\end{theorem}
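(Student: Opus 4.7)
The plan is to construct $T$ as a convergent series of rank-one operators $T=\sum_k\beta_k\,x_k^*\otimes y_k$ built from auxiliary biorthogonal systems in $X$ and $Y$, with weights $\beta_k$ tracking the prescribed profile $(\alpha_k)$. After passing to separable subspaces, I would invoke standard selection principles (Markushevich / Ovsepian--Pelczy\'nski) to obtain a biorthogonal system $(x_k,x_k^*)\subset X\times X^*$ with $\|x_k\|,\|x_k^*\|\leq 1+\vr$, together with a normalized basic sequence $(y_k)\subset Y$ of basis constant close to $1$, whose coordinate functionals are extended by Hahn--Banach to $y_k^*\in Y^*$ with $\|y_k^*\|$ close to $1$. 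The weights $\beta_k$ are chosen of order $\alpha_{\lceil k/6\rceil}$: the ``repeat each $\alpha$-value six times along the diagonal'' device absorbs the biorthogonal-system overhead and is what produces the shift $\lceil m/6\rceil$ in the upper bound. The partial sums $T_n=\sum_{k\leq n}\beta_k\,x_k^*\otimes y_k$ have rank $n$, and the basic-sequence structure of $(y_k)$ together with $\beta_k\searrow 0$ ensures that their strong limit $T$ is a well-defined approximable operator with $\|T\|\leq 2\alpha_1$. The upper bound $a_m(T)\leq 3\alpha_{\lceil m/6\rceil}$ then follows from $a_m(T)\leq\|T-T_{m-1}\|$ combined with the basic-sequence tail estimate.

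The main obstacle is the lower bound $t_m(T)\geq\alpha_m/9$. Since $t_m$ is invariant under quotient maps onto $X$ and isometric embeddings out of $Y$, the natural route is via Bernstein numbers: $t_m(T)\geq b_m(T)$, where $b_m(T)=\sup_{\dim F=m}\inf_{f\in F,\,\|f\|=1}\|Tf\|$. One exhibits an $m$-dimensional witness $F_m=\operatorname{span}\{x_k:k\in I_m\}\subset X$ for an appropriate $m$-element index set $I_m$ drawn from the ``good'' block where $\beta_k\geq\alpha_m/9$, and verifies for $f=\sum_{k\in I_m}a_k x_k$ that $Tf=\sum_{k\in I_m}a_k\beta_k y_k$ satisfies $\|Tf\|\geq(\alpha_m/9)\|f\|$. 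The subtlety here --- and the main technical work --- is that $(y_k)$ is only conditionally basic, so one cannot appeal to unconditionality to pass from a coefficientwise lower bound (via the $y_k^*$) to a uniform bound on $\|Tf\|/\|f\|$. The block-of-six spacing in the weights, combined with a careful choice of $I_m$, is what allows one to rule out destructive interference in the expansion $\sum a_k\beta_k y_k$.

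For the Weyl, Chang, and Hilbert bounds I would combine the Bernstein witness $F_m$ with John's ellipsoid theorem. The subspace $F_m$ admits an embedding $\iota:\ell_2^m\to F_m$ with $\|\iota\|\leq 1$ and $\|\iota^{-1}\|\leq\sqrt m$; extending by zero gives a contraction $u:\ell_2\to X$ such that $Tu$ is bounded below by $\alpha_m/(9\sqrt m)$ on $\ell_2^m$. A rank-plus-kernel argument then shows $a_m(Tu)\geq\alpha_m/(9\sqrt m)$, yielding the stated bound on $x_m$. The analogous John construction on $T(F_m)\subset Y$ produces a contraction $v:Y\to\ell_2$ giving the same bound on $y_m$; combining $u$ and $v$ (and thus paying two $\sqrt m$-factors simultaneously) gives $h_m(T)\geq\alpha_m/(9m)$.
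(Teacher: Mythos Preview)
The central gap is in the lower bound. Even granting $t_m(T)\geq b_m(T)$ (which is correct), the Bernstein witness $F_m=\span\{x_k:k\in I_m\}$ does not deliver $\|Tf\|\geq(\alpha_m/9)\|f\|$ in general, because after cancelling the constant weights on the block this amounts to a uniform lower bound on the formal map $\span\{x_k:k\in I_m\}\to\span\{y_k:k\in I_m\}$, $x_k\mapsto y_k$. A biorthogonal system in $X$ and a basic sequence in $Y$ carry no such compatibility: if $X=\ell_1$ and $Y=c_0$ with $(x_k)$, $(y_k)$ the unit-vector bases, then for $f=\sum_{k\in I_m}x_k$ one has $\|f\|=m$ but $\|Tf\|=\max_{k\in I_m}\beta_k\approx\alpha_m$, so this witness gives only $\alpha_m/m$. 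The block-of-six device adjusts the weights, not the geometry of the two subspaces, and no choice of $I_m$ repairs a factor-of-$m$ mismatch between the ambient norms. There is also an unresolved convergence issue: for fixed $x$ the tail $\sum_{k\geq m}\beta_k\langle x_k^*,x\rangle y_k$ has coefficients of size $\lesssim\beta_k\|x\|$, and a general basic sequence does not turn a null coefficient sequence into a convergent series, so neither the existence of $T$ nor the estimate $\|T-T_{m-1}\|\leq 3\alpha_{\lceil m/6\rceil}$ follows without summability of $(\alpha_k)$.

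The paper avoids both problems by replacing rank-one pieces with block operators whose blocks are nearly Euclidean. One fixes a rapidly increasing sequence $(n_k)$ with $\alpha_{n_k}\leq\alpha_{n_{k-1}+1}/5$, uses Dvoretzky's theorem (Lemma~\ref{l:dvor}) to produce $n_k$-dimensional quotients $E_k$ of $X$ and subspaces $F_k\hra Y$ with $d(E_k,\ell_2^{n_k}),\,d(F_k,\ell_2^{n_k})<\sqrt{c}$ together with separating finite-rank maps $u_k,v_k$ on $\ell_\infty(\Gamma)$, and sets $T=\sum_k j_kV_kD_kU_kq_k$ with $D_k$ diagonal on $\ell_2^{n_k}$. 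The series converges in norm because $\|S_k\|\leq\alpha_{n_{k-1}+1}$ decays geometrically by construction; and because both ends of each block are Hilbertian, the image of a ball under the $k$-th block is again essentially a ball, so the ball-containment argument gives $t_m(T)=a_m(JTQ)\geq(c(1+c)^2)^{-1}\alpha_m$ with no dimensional loss. The Weyl, Chang, and Hilbert estimates then come from Kadec--Snobar-type extensions into and out of these Euclidean blocks, each extension costing exactly one factor of $\sqrt{m}$.
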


In general, one cannot omit the condition $\lim \alpha_m = 0$.
Indeed, suppose $X = \ell_p$ (or $X = c_0$), and $Y = \ell_q$, with $p > q \geq 1$
($\infty > q \geq 1$ if $X = c_0$).
By Pitt's theorem \cite[Proposition 2.c.3]{LT1}, any $T \in B(X,Y)$ is compact.
Furthermore, $Y$ has the Approximation Property, hence, by \cite[Theorem 1.e.4]{LT1},
any compact operator into $Y$ is approximable. Thus,
$\lim_m a_m(T) = 0$ for any $T \in B(X,Y)$.

The lower estimates for $x_m(T)$, $y_m(T)$, and $h_m(T)$ are best possible, too.

\begin{proposition}\label{prop:optimal}
$\lim \sqrt{k} x_k(T) = \lim \sqrt{k} y_k(T) = \lim k h_k(T) = 0$
for any $T \in B(c_0, \ell_1)$.
\end{proposition}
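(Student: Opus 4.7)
The plan is to invoke Grothendieck's theorem twice. First, it supplies a factorization $T = BA$ with $A \in B(c_0,\ell_2)$ and $B \in B(\ell_2,\ell_1)$ satisfying $\|A\|\,\|B\| \leq K_G \|T\|$. Second, by Grothendieck's ``little theorem'' every bounded operator from $c_0$ (or any $C(K)$-space) into a Hilbert space is $2$-summing, so in particular $A$ is $2$-summing; the symmetric statement (Grothendieck's ``fundamental theorem'') gives that every bounded $v : \ell_1 \to \ell_2$ is $1$-summing, and hence $2$-summing. The key analytic consequence I would exploit is the standard fact that, whenever $S : X \to H$ is $2$-summing into a Hilbert space $H$ and $R : H' \to X$ is a bounded operator from another Hilbert space, the composition $SR : H' \to H$ is Hilbert-Schmidt with $\|SR\|_{HS} \leq \pi_2(S)\,\|R\|$. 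Applied to our setting, for every $u \in B(\ell_2,c_0)$ and $v \in B(\ell_1,\ell_2)$ with $\|u\|, \|v\| \leq 1$, both $Au : \ell_2 \to \ell_2$ and $vB : \ell_2 \to \ell_2$ are Hilbert-Schmidt, with Hilbert-Schmidt norms bounded by constants depending only on $T$.

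For the Weyl estimate I would fix one such $u_0$ and write $Tu_0 = B(Au_0)$, so that $a_k(Tu_0) \leq \|B\|\,a_k(Au_0)$. Hilbert-Schmidtness of $Au_0$ yields $\sum_k a_k(Au_0)^2 < \infty$, hence $\sum_k a_k(Tu_0)^2 < \infty$. Since by definition $x_k(T) \leq a_k(Tu_0)$, the sequence $(x_k(T))$ is square-summable; combined with its monotonicity this forces $k\,x_{2k}(T)^2 \leq \sum_{j > k} x_j(T)^2 \to 0$, whence $\sqrt{k}\,x_k(T) \to 0$. The Chang case is dual: fixing $v_0$ and using $v_0 T = (v_0 B) A$ together with Hilbert-Schmidtness of $v_0 B$ gives $\sum_k y_k(T)^2 < \infty$, and therefore $\sqrt{k}\,y_k(T) \to 0$.

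For the Hilbert numbers I would fix $u_0$ and $v_0$ as above and observe that $v_0 T u_0 = (v_0 B)(A u_0) : \ell_2 \to \ell_2$ is a product of two Hilbert-Schmidt operators, hence trace class, with trace norm at most $\|v_0 B\|_{HS}\,\|Au_0\|_{HS}$. In particular $\sum_k a_k(v_0 T u_0) < \infty$, and since $h_k(T) \leq a_k(v_0 T u_0)$ by definition, we get $\sum_k h_k(T) < \infty$; monotonicity of $(h_k(T))$ then yields $k\,h_k(T) \to 0$. The only real obstacle is citing the correct formulations of Grothendieck's theorem and the corresponding Hilbert-Schmidt and trace-class ideal inequalities; once these are in hand, all three assertions fall out in parallel from the same factorization $T = BA$.
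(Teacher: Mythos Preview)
The inequality you invoke goes the wrong way. The Weyl, Chang and Hilbert numbers are \emph{suprema}:
\[
x_k(T)=\sup_{\|u\|\le 1} a_k(Tu),\qquad
y_k(T)=\sup_{\|v\|\le 1} a_k(vT),\qquad
h_k(T)=\sup_{\|u\|,\|v\|\le 1} a_k(vTu)
\]
(the ``$\inf$'' printed in \eqref{eq:s_numbers} is a slip --- with a literal infimum one gets $0$ by taking $u=0$, and the lower-bound arguments in the proof of Theorem~\ref{thm:controlled} only make sense for a supremum). Consequently, fixing a single $u_0$ gives $x_k(T)\ge a_k(Tu_0)$, not $\le$; square-summability of $(a_k(Tu_0))_k$ therefore says nothing about $(x_k(T))_k$. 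The same objection applies verbatim to your treatment of $y_k$ and $h_k$.

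If instead you exploit the \emph{uniformity} of your Hilbert--Schmidt bound over all contractions $u$, you recover
\[
a_k(Tu)\le \|B\|\,a_k(Au)\le \|B\|\,\frac{\|Au\|_{HS}}{\sqrt{k}}\le \frac{\|B\|\,\pi_2(A)}{\sqrt{k}},
\]
and taking the supremum yields $\sqrt{k}\,x_k(T)\le \|B\|\,\pi_2(A)$ --- bounded, but not tending to $0$. (This is essentially K\"onig's inequality $x_k\le \pi_2/\sqrt{k}$, which the paper quotes.) The analogous computation for $h_k$ gives only $k\,h_k(T)\le \text{const}$. To pass from boundedness to a genuine limit you need an ingredient absent from your sketch: every $T\in B(c_0,\ell_1)$ is approximable (Pitt's theorem plus the Approximation Property of $\ell_1$). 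The paper uses this by truncating, writing $T=P_N^*TP_N+(T-P_N^*TP_N)$ with $\|T-P_N^*TP_N\|<\varepsilon$, applying the uniform estimate $\sqrt{k}\,x_k(S)\le K_G\|S\|$ to the small-norm tail, and shifting indices via $x_k(T)\le x_{k-N}(T-P_N^*TP_N)$. Your factorization $T=BA$ can be salvaged along the same lines (truncate $A$, which is compact by Pitt), but the approximation step is not optional.
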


Note that, unlike the results of \cite{ALe, HMRe}, Theorem~\ref{thm:controlled}
covers all pairs $(X,Y)$ of infinite dimensional Banach spaces. We do not
know whether this theorem can be strengthened to obtain $T \in B(X,Y)$
with (say) $\alpha_i \leq a_i(T) \leq C \alpha_i$, for some fixed constant $C$.
However, for some pairs $(X,Y)$, one cannot find an operator $T : X \to Y$ with
{\it precisely} the prescribed Gelfand or approximation numbers.
Recall that a Banach space $X$ is called {\it strictly convex} if for every
$x, y \in X$, $\|x+y\| = \|x\| + \|y\|$ can hold only if $x$ and $y$ are
scalar multiples of each other (see e.g. \cite{JoL}). Therefore,
any $x^* \in X^*$ can attain its norm at no more than one point of the
unit ball of $X$. It is known that for every separable Banach space there exists
an equivalent strictly convex norm (and more -- see Section 1 of \cite{God}).

\begin{proposition}\label{prop:second}
Suppose $X$ is a strictly convex reflexive Banach space, and
$T : X \to c_0$ is compact. Then $a_2(T) = c_2(T) < a_1(T) = c_1(T) = \|T\|$.
\end{proposition}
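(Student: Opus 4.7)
The plan is to identify $T \in B(X, c_0)$ with the sequence $(x_k^*)_{k \geq 1} \subset X^*$ of its coordinate functionals, so that $\|T\| = \sup_k \|x_k^*\|$. Compactness of $T$ makes $T(B_X)$ relatively norm-compact in $c_0$, which (since compact subsets of $c_0$ are precisely the bounded sets whose coordinate projections tend to zero uniformly) forces $\|x_k^*\| \to 0$. Consequently $M := \|T\|$ is attained on a \emph{finite} set $K \subset \N$; James' theorem makes each $x_k^*$ norm-attaining on $B_X$, and strict convexity of $X$ singles out, for each $k \in K$, a unique $\bar{x}_k \in B_X$ with $x_k^*(\bar{x}_k) = M$. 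The equalities $a_1(T) = c_1(T) = \|T\|$ are immediate from the definitions.

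To establish $a_2(T) = c_2(T)$ (the inequality $a_2 \geq c_2$ is automatic), I would fix $E = \ker x^*$ with $\|x^*\| = 1$ and look for a rank-one operator $S = x^* \otimes z$, $z \in c_0$, matching $\|T|_E\|$. The key observation is $\|T - S\| = \sup_k \|x_k^* - z_k x^*\|$, so the choice of each $z_k$ decouples: the optimal $z_k$ realizes $\inf_\lambda \|x_k^* - \lambda x^*\| = \|x_k^*|_E\|$ (Hahn--Banach), and this infimum is attained in the reflexive space $X^*$. The triangle inequality gives $|z_k|\,\|x^*\| \leq 2 \|x_k^*\| \to 0$, so the resulting $z$ actually lies in $c_0$. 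Hence $\|T - S\| = \sup_k \|x_k^*|_E\| = \|T|_E\|$; taking the infimum over $E$ yields $a_2(T) \leq c_2(T)$.

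The substantive step is the strict inequality $c_2(T) < \|T\|$, which I would prove by contradiction. Suppose $c_2(T) = M$; then for every unit $x^* \in X^*$, $\|T|_{\ker x^*}\| \geq M$. Since $\|x_k^*\| \to 0$, one has $\sup_{k \notin K} \|x_k^*|_{\ker x^*}\| < M$, so some $k \in K$ must satisfy $\|x_k^*|_{\ker x^*}\| = M = \|x_k^*\|$. Reflexivity makes this norm attained at a unit vector of $\ker x^* \cap B_X$, and strict convexity forces that vector to be $\bar{x}_k$ (up to a unimodular scalar); hence $\bar{x}_k \in \ker x^*$, i.e., $x^*(\bar{x}_k) = 0$. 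We conclude $X^* = \bigcup_{k \in K} \{x^* : x^*(\bar{x}_k) = 0\}$, a finite union of proper closed hyperplanes, contradicting the elementary fact that a vector space over $\R$ or $\C$ is not a union of finitely many proper subspaces. Translating the analytic hypothesis $c_2(T) = M$ into this algebraic covering property is the heart of the argument, and the natural place where strict convexity enters.
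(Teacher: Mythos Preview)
Your argument is correct. The contradiction you derive for $c_2(T)<\|T\|$ --- reducing to a covering of $X^*$ by finitely many hyperplanes $\{x^*: x^*(\bar x_k)=0\}$ --- is exactly the paper's argument. One cosmetic remark: the attainment of $\inf_\lambda \|x_k^*-\lambda x^*\|$ needs no reflexivity; it is a one--parameter minimisation of a continuous coercive function.

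Where you differ from the paper is in the equality $a_2(T)=c_2(T)$. The paper does not build a rank-one approximant by hand; it quotes Lindenstrauss's compact extension theorem: if $Y$ is an $L_1$-predual and $T:X\to Y$ is compact, then for any $E\hookrightarrow X$ of finite codimension one can extend $T|_E$ to $S:X\to Y$ with $\|S\|$ arbitrarily close to $\|T|_E\|$, whence $a_k(T)=c_k(T)$ for \emph{every} $k$. Your coordinate-wise Hahn--Banach construction is more elementary and self-contained, but it is tailored to $c_0$ and to rank one; the paper's route costs a black box but immediately gives the full sequence of equalities and works for any $L_1$-predual target.
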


The condition that $T$ is compact (equivalently, $\lim_k c_k(T) = 0$) is
essential: if $T$ is the formal embedding of $\ell_p$ to $c_0$
($1 \leq p < \infty$), then $c_k(T) = 1$ for each $k$.
However, the compactness of $T$ is equivalent to $\lim a_i(T) = 0$.

Note also that $\ell_1$ is not strictly convex, hence the above proposition
doesn't apply to the operators from $\ell_1$ to $c_0$.
In fact, \cite{ALe} shows that, for any decreasing sequence $(\alpha_m)$
with $\lim \alpha_m = 0$, there exists $T \in B(\ell_1, c_0)$ such that
$a_m(T) = \alpha_m$ for every $m$.

Now suppose ${\mathcal{A}}$ is a quasi-Banach operator ideal,
equipped with the norm $\| \cdot \|_{\mathcal{A}}$ (see e.g.~\cite{DJT, PieID, T-J}
for the definition and basic properties of operator ideals).
Define the {\it ${\mathcal{A}}$-approximation numbers} by setting
$$
a^{(\mathcal{A})}_n(T) = \inf_{u \in B(X,Y), \rank u < n} \|T-u\|_{\mathcal{A}} .
$$
The {\it ${\mathcal{A}}$-Gelfand numbers} 
are defined by
$$
c^{(\mathcal{A})}_n(T) = \inf_{E \hra X, \codim E < n} \|T|_E\|_{\mathcal{A}} .
$$

We are especially interested in the ideals of $p$-factorable
and $(t,r)$-summing operators.
Recall that $T \in B(X,Y)$ is called {\it $p$-factorable} ($1 \leq p \leq \infty$)
if it can be represented as $T = T_2 T_1$, with $T_1 \in B(X,L_p(\mu))$ and
$T_2 \in B(L_p(\mu), Y)$. The associated norm is given by
$\gamma_p(T) = \inf \|T_2\| \|T_1\|$, with the infimum running over all
representations of the above form. The ideal of all $p$-summing operators
is denoted by $\Gamma_p$.

An operator $T \in B(X,Y)$ is {\it $(t,r)$-summing} ($1 \leq r \leq t \leq \infty$)
if there exists a constant $C$ such that, for every $x_1, \ldots, x_n \in X$,
$$
\Big( \sum_{i=1}^n \|T x_i\|^t \Big)^{1/t} \leq
C \Big( \sup_{x^* \in X^*, \|x^*\| \leq 1} \sum_{i=1}^n |\langle x^*, x_i \rangle|^r \Big)^{1/r} .
$$
The infimum of all $C > 0$ with the above property is denoted by $\pi_{t,r}(T)$,
and the corresponding ideal -- by $\Pi_{tr}$. When $t=r$, we use the notation
$\pi_r$ and $\Pi_r$, and the term {\it $r$-summing}.

For $p$-factorable operators, we have:

\begin{theorem}\label{cor:factor}
For $1 < p < \infty$, there exists a constant $K_p$ such that, whenever
$X$ and $Y$ are infinite dimensional Banach spaces, and $\alpha_k \searrow 0$,
there exists an approximable $T : X \to Y$ such that $\|T\| \leq 2 \alpha_1$
and, for every $m$,
$$
K_p \alpha_{\lceil m/6 \rceil} \geq a_m^{(\Gamma_p)} (T) \geq
c_m^{(\Gamma_p)}(T) \geq \alpha_m/9 .
$$
\end{theorem}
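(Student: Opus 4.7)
The plan is to reuse the operator $T$ produced by the proof of Theorem~\ref{thm:controlled}, adjusting only the choice of the finite-dimensional stages used in its construction so that they become ``Hilbertian''. The middle and lower parts of the conclusion then come essentially for free. Since every operator ideal norm dominates the operator norm,
\[
c_m^{(\Gamma_p)}(T) \;=\; \inf_{\codim E < m} \gamma_p(T|_E) \;\geq\; \inf_{\codim E<m} \|T|_E\| \;=\; c_m(T) \;\geq\; t_m(T) \;\geq\; \alpha_m/9,
\]
using $t_m \leq c_m$ together with Theorem~\ref{thm:controlled}. Likewise, $a_m^{(\Gamma_p)}(T) \geq c_m^{(\Gamma_p)}(T)$ is standard: for any $S$ with $\rank S < m$, the subspace $\ker S$ has codimension $<m$ and $T|_{\ker S} = (T-S)|_{\ker S}$, so $\gamma_p(T|_{\ker S}) \leq \gamma_p(T-S)$.

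The real work is the upper bound $a_m^{(\Gamma_p)}(T) \leq K_p \alpha_{\lceil m/6\rceil}$. The construction underlying Theorem~\ref{thm:controlled} produces $T$ as a series $\sum_k T_k$ whose blocks $T_k$ act between finite-dimensional subspaces of geometrically growing dimension $n_k$ and have operator norms of order $\alpha_{N_{k-1}}$, with $N_k = n_1 + \ldots + n_k$ the cumulative dimension (the factor $6$ in the index shift being precisely the bookkeeping cost of this geometric growth). Take $S_m$ to be the truncation of this series at the largest $k_0$ with $N_{k_0} < m$, so that $\rank S_m < m$ and $T - S_m = \sum_{k > k_0} T_k$. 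By Dvoretzky's theorem, applied in $X$ and in $Y$, one arranges that each block $T_k$ acts between subspaces uniformly isomorphic to $\ell_2^{n_k}$; then $\gamma_2(T_k) \leq C \|T_k\|$ for an absolute constant. For $1 < p < \infty$, Khintchine's inequality shows that the identity of $\ell_2$ factors through $L_p$ with some constant $K'_p$, so $\gamma_p(T_k) \leq C K'_p \|T_k\|$. Summing over $k > k_0$ via the triangle inequality for $\gamma_p$, and using the geometric decay of $\|T_k\|$ against the doubling of block sizes, the tail $\sum_{k > k_0} \gamma_p(T_k)$ telescopes into a geometric series bounded by $K_p \alpha_{N_{k_0}} \leq K_p \alpha_{\lceil m/6\rceil}$.

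The main obstacle is ensuring that the adjustment to Hilbertian (Dvoretzky-type) stages is compatible with the biorthogonal systems employed in Theorem~\ref{thm:controlled} to secure the lower bound $t_m(T) \geq \alpha_m/9$. Since Dvoretzky-type subspaces can always be embedded inside the ``large enough'' stages already chosen in the original construction, this compatibility should follow from a careful re-reading of that proof rather than from any new ingredient. A secondary technical point is controlling $\gamma_p$ of a block-diagonal tail by the sum of block $\gamma_p$-norms; this is routine provided the blocks sit on sufficiently well-complemented subspaces, which is part of what the construction of Theorem~\ref{thm:controlled} already supplies.
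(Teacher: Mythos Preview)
Your lower-bound argument is fine and matches the paper exactly. Moreover, no ``adjustment'' of the construction is needed: in the proof of Theorem~\ref{thm:controlled} the blocks already factor through $\ell_2^{n_k}$ (each $S_k = V_k D_k U_k$ with $D_k$ diagonal on $\ell_2^{n_k}$), so $\gamma_2(j_k S_k q_k) \leq \|D_k\|$ and hence $\gamma_p(j_k S_k q_k) \leq C_p \|D_k\|$ via the complemented copy of $\ell_2$ in $L_p$. The ``main obstacle'' you worry about is therefore not an obstacle at all.

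The genuine gap is in your upper bound. Truncating only at block boundaries gives
\[
a_m^{(\Gamma_p)}(T) \;\leq\; \sum_{k>k_0} \gamma_p(T_k) \;\lesssim\; C_p\,\alpha_{n_{k_0}+1}
\qquad (N_{k_0} < m \leq N_{k_0+1}),
\]
and you then assert $\alpha_{N_{k_0}} \leq \alpha_{\lceil m/6\rceil}$, i.e.\ $N_{k_0} \geq \lceil m/6\rceil$. But the construction only demands $n_{k+1} > 5(n_k+1)$ and $\alpha_{n_{k+1}} \leq \alpha_{n_k+1}/5$; it puts \emph{no upper bound} on $n_{k+1}/n_k$. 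If $(\alpha_j)$ has a sharp drop just after $n_{k_0}+1$ and then a very long plateau, $n_{k_0+1}$ (hence $N_{k_0+1}$) can be astronomically larger than $N_{k_0}$, so for $m$ deep inside block $k_0+1$ one has $\lceil m/6\rceil \gg n_{k_0}+1$ and $\alpha_{\lceil m/6\rceil}$ can be arbitrarily small compared with $\alpha_{n_{k_0}+1}$. Your bound then fails for any fixed $K_p$.

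What is missing is exactly what the paper does: one must also approximate \emph{inside} the current block. Writing $T = T^{(1)} + T^{(2)} + T^{(3)}$ with $T^{(2)} = j_k S_k q_k$, one uses
\[
a_{m-n_k'}^{(\Gamma_p)}(T^{(2)}) \leq a_{m-n_k'}^{(\Gamma_p)}(D_k) \leq C_p\,\beta_{m-n_k',k},
\]
the last inequality holding because any tail of the diagonal $D_k$ still factors through $\ell_2$ with norm equal to its largest remaining entry. This within-block step is what converts the bound into $C_p\,\alpha_{\lceil m/3\rceil}$ (and then $\alpha_{\lceil m/6\rceil}$ after handling the boundary range $n_{k-1} < m \leq 3n_k'/2$), and it cannot be replaced by a block-boundary truncation.
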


As we shall see below, the operator $T$ constructed in Theorem~\ref{thm:controlled} has
the properties described by this theorem.

Next we handle the ideal of $p$-summing operators $\Pi_p$.

\begin{theorem}\label{thm:2sum}
If $X$ and $Y$ are infinite dimensional Banach spaces, and $\alpha_k \searrow 0$,
there exists a $2$-summing map $T \in B(X,Y)$, such that
$$
c_p \alpha_{18m} \leq \cs_m(T) \leq \as_m(T) 
\leq 3 \alpha_{\lceil 4m/5 \rceil}
$$
for every $m$, and every $p \in [2,\infty)$
($c_p$ is a constant depending on $p$).
\end{theorem}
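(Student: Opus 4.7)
The plan is to build $T$ via a block-diagonal construction refining that of Theorem~\ref{thm:controlled}, arranged to place $T$ in $\Pi_2$ and control $a^{(\Pi_p)}_m(T)$. Partition $\N$ into consecutive blocks $I_1,I_2,\ldots$ with $|I_j|=k_j$, choose bounded biorthogonal systems $(x_i^*)_{i\in I_j}\subset X^*$ and $(y_i)_{i\in I_j}\subset Y$ (supplied by the basic-sequence/Markushevich selection underlying Theorem~\ref{thm:controlled}), and set
\begin{equation*}
T=\sum_{j\geq 1}\alpha_{n_j}S_j,\quad S_j=\sum_{i\in I_j}x_i^*\otimes y_i,\quad n_j=k_1+\cdots+k_j.
\end{equation*}
The block sizes $k_j$ are chosen adaptively to $(\alpha_n)$ so that (i) $k_j\geq n_{j-1}/4$, equivalently $n_{j-1}\leq 4n_j/5$, which produces the index $\lceil 4m/5\rceil$ in the upper bound; and (ii) $\sum_j\alpha_{n_j}^2 k_j<\infty$, which combined with the rank estimate $\pi_2(S_j)\leq C\sqrt{k_j}$ (factoring $S_j$ through $\ell_2^{k_j}$ and using $\pi_2(\mathrm{id}_{\ell_2^{k_j}})=\sqrt{k_j}$), together with the near-orthogonality of the ranges, yields $\pi_2(T)<\infty$. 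Hence $T\in\Pi_2\subset\Pi_p$ for every $p\geq 2$.

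For the upper bound $\as_m(T)\leq 3\alpha_{\lceil 4m/5\rceil}$: given $m$, locate the block $j$ with $n_{j-1}<\lceil 4m/5\rceil\leq n_j$, and approximate $T$ by a rank-${<}m$ operator that absorbs $\sum_{i<j}\alpha_{n_i}S_i$ in full and truncates $\alpha_{n_j}S_j$ down to rank $m-1-n_{j-1}$. The residual has $\pi_p$-norm at most
\begin{equation*}
\alpha_{n_j}\cdot C\sqrt{n_j-m+1+n_{j-1}}+\Bigl(\sum_{i>j}\alpha_{n_i}^2 k_i\Bigr)^{1/2},
\end{equation*}
which conditions (i)--(ii) render $\leq 3\alpha_{n_j}\leq 3\alpha_{\lceil 4m/5\rceil}$ by geometric tail summation.

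For the lower bound an elementary chain suffices: $\cs_m(T)\geq c_m(T)\geq t_m(T)\geq \alpha_m/9\geq \alpha_{18m}/9$, using $\pi_p(\cdot)\geq \|\cdot\|$, the definitional relation among $s$-numbers in \eqref{eq:s_numbers}, Theorem~\ref{thm:controlled} applied to this same $T$ (whose construction is patterned on that of Theorem~\ref{thm:controlled} and satisfies its hypotheses), and the monotonicity of $(\alpha_n)$. One may therefore take $c_p=1/9$ uniformly in $p$; the $p$-dependence in the statement is stated only for generality.

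The main obstacle is arranging (i) and (ii) simultaneously for arbitrary $\alpha_k\searrow 0$: for slowly decaying $\alpha$, condition (ii) forces $k_j$ to be rather small, in tension with the near-geometric block growth demanded by (i). The factor $18$ in the lower bound quantifies the resulting slack in the $m\leftrightarrow n_j$ correspondence. A secondary technical point is the verification that the block ranges can be made nearly orthogonal, so that the $\pi_2$-bound is sub-additive in $j$ rather than merely additive; this rests on a careful perturbation of the biorthogonal systems in the construction underlying Theorem~\ref{thm:controlled}.
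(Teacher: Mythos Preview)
Your proposal has a genuine gap, and you essentially identify it yourself: conditions (i) and (ii) cannot be arranged simultaneously for an arbitrary sequence $\alpha_k\searrow 0$. If, say, $\alpha_k=1/\log(k+2)$, then (i) forces $n_j\geq (5/4)n_{j-1}$, hence $n_j$ grows geometrically and $\log n_j\asymp j$; but then $\alpha_{n_j}^2 k_j\asymp n_j/j^2$ diverges, so (ii) fails. Consequently your $T$ need not be $2$-summing, and the upper bound also collapses: the first term in your residual estimate is of order $\alpha_{n_j}\sqrt{k_j}$, not $\alpha_{n_j}$, and nothing you have written makes $\alpha_{n_j}\sqrt{k_j}\leq C\alpha_{n_j}$. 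The lower bound argument is separately problematic: Theorem~\ref{thm:controlled} is an existence statement about a particular operator, not an estimate valid for every block-diagonal $T$; the operator you build here (constant coefficient $\alpha_{n_j}$ on the $j$-th block) is \emph{not} the operator of Theorem~\ref{thm:controlled}, and for $m$ near $n_{j-1}+1$ your $t_m(T)$ is at best of order $\alpha_{n_j}$, which can be far smaller than $\alpha_m$.

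The paper's proof repairs exactly these points by two devices you are missing. First, Lemma~\ref{l:convex} replaces $(\alpha_k)$ by a \emph{convex} minorant $(\beta_k)$ with $\beta_k\geq\min\{\alpha_k/2,\alpha_{2k-1}\}$; this reduction is the source of the factor $18$. Second, within each block the diagonal entries are taken to be $\beta_{jk}=(\alpha_{j+2n_{k-1}}^2-\alpha_{j+2n_{k-1}+1}^2)^{1/2}$, so that $\pi_2(D_k)^2=\sum_j\beta_{jk}^2$ \emph{telescopes} to at most $\alpha_{2n_{k-1}+1}^2$; with $n_k$ chosen so that $\alpha_{n_k}\leq\alpha_{5(n_{k-1}+1)}/5$ this makes $\sum_k\pi_2(D_k)$ converge for \emph{any} $\alpha$, and simultaneously gives $\pi_2$-control on truncations for the upper bound. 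The diagonal $D_k$ is taken as a map $\ell_\infty^{n_k}\to\ell_2^{n_k}$ (via an extension $W_k:X\to\ell_\infty^{n_k}$), which is what makes $\pi_2(D_k)=(\sum\beta_{jk}^2)^{1/2}$ exact. The lower bound on $\cs_m$ is then obtained directly, not via Theorem~\ref{thm:controlled}: one isolates the $k$-th block with a bounded projection $R_k$ on $Y$ and uses Weyl's minimax principle to get $\|D_k|_H\|_{HS}^2\geq\alpha_{m+2n_{k-1}}^2-\alpha_{n_k+2n_{k-1}+1}^2$, followed by a case analysis that exploits the convexity of $(\alpha_k)$.
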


For certain pairs $(X,Y)$, one can construct $T \in B(X,Y)$ with the
prescribed rate of decay of $(\csa_m(T))$ and $(\asa_m(T))$ for other
classes of ideals $\mathcal{A}$. Recall that a Banach operator ideal
${\mathcal{A}}$ is called {\it $1$-injective} if, for any $u \in B(X,Y)$,
and any isometric injection $J : Y \to Y_0$, we have
$\|u\|_{\mathcal{A}} = \|J u\|_{\mathcal{A}}$. For instance, the ideal
$\Pi_{tr}$ of $(t,r)$-summing operators is $1$-injective.

\begin{theorem}\label{thm:no_cotype}
Suppose $\alpha_k \searrow 0$,
and the Banach spaces $X$ and $Y$ have no non-trivial cotype and no non-trivial
type, respectively. Then there exists $T \in B(X,Y)$ such that
$$
\frac{1}{50} \alpha_{18m}
\leq c_m(T) \leq \csa_m(T) \leq \asa_m(T) \leq 4 \alpha_{\lceil 4m/5 \rceil}
$$
for every $m$, and every $1$-injective Banach operator ideal $\A$.
\end{theorem}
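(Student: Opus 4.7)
The plan is to reproduce the construction from Theorem~\ref{thm:controlled} but, exploiting the hypothesis, to make each block factor through a near-isometric copy of the scaled formal identity $j_n/n\colon\ell_\infty^n\to\ell_1^n$. The point is that $j_n/n$ has operator norm and nuclear norm both equal to $1$, so the universal inequality $\|u\|_\A\leq\nu(u)$ (valid on finite-rank operators for any Banach operator ideal, where $\nu$ denotes the nuclear norm) combined with $1$-injectivity of $\A$ converts a single block-by-block nuclear-norm estimate into a uniform upper bound on $\asa_m(T)$ across all $1$-injective $\A$.

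Invoking Maurey--Pisier: the absence of non-trivial cotype in $X$ gives $(1+\vr)$-copies of $\ell_\infty^n$ in $X$ for every $n$ and every $\vr>0$, and the absence of non-trivial type in $Y$ gives $(1+\vr)$-copies of $\ell_1^n$ in $Y$. I would fix a summable $\vr_k\searrow 0$ and integers $n_k$ so that $m_k:=\sum_{j\leq k}n_j$ grows with ratio $m_{k+1}/m_k$ close to $5/4$ (so as to place $m_K\geq\lceil 4m/5\rceil$ whenever $K$ is maximal with $m_K<m$), select $F_k\hra X$ with $F_k\sim_{(1+\vr_k)}\ell_\infty^{n_k}$ and $G_k\hra Y$ with $G_k\sim_{(1+\vr_k)}\ell_1^{n_k}$ --- the latter arranged into an almost-$\ell_1$ successive decomposition by basic-sequence extraction --- and Hahn--Banach-extend the coordinate functionals on $F_k$ to maps $P_k\colon X\to\ell_\infty^{n_k}$ of norm at most $1+\vr_k$. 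The operator is
\begin{equation*}
T \;=\; \sum_k \gamma_k\,\iota_k\circ\bigl(n_k^{-1}j_{n_k}\bigr)\circ P_k,
\end{equation*}
with $\iota_k\colon G_k\hra Y$ the inclusion and $\gamma_k$ a scalar of order $\alpha_{m_{k-1}+1}$; each summand has operator and nuclear norm at most $(1+\vr_k)^2\gamma_k$.

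The lower bound $c_m(T)\geq\frac{1}{50}\alpha_{18m}$ follows as in Theorem~\ref{thm:controlled}: a codimension-$<m$ subspace of $X$ meets $F_{k^\ast}$ in a subspace of codimension $<n_{k^\ast}$ whenever $n_{k^\ast}>m$, so on this intersection $T$ inherits the Pietsch formula $c_m(j_n/n)=(n-m+1)/n$; choosing $k^\ast$ so that $n_{k^\ast}$ exceeds $m$ by a comfortable factor, which (given the $5/4$-ratio) occurs at a block index with $m_{k^\ast-1}$ of order $18m$, yields the claimed bound after accounting for Maurey--Pisier distortions. The $\A$-approximation upper bound becomes
\begin{equation*}
\asa_m(T)\leq\Bignorm{T-\sum_{k\leq K}T_k}_\A\leq\sum_{k>K}\nu(T_k)\leq(1+o(1))\sum_{k>K}\gamma_k,
\end{equation*}
and the $5/4$-growth bounds the leading term $\gamma_{K+1}\leq\alpha_{m_K+1}\leq\alpha_{\lceil 4m/5\rceil}$.

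The main obstacle is the control of the tail sum $\sum_{k>K}\gamma_k$ by a bounded multiple of $\gamma_{K+1}$ when $(\alpha_n)$ decreases slowly enough that $(\gamma_k)$ itself is not geometric. I anticipate the resolution to be a variant of the construction using diagonal blocks with unequal entries $d_i^{(k)}=\alpha_{m_{k-1}+i-1}-\alpha_{m_{k-1}+i}$: this gives $\nu(T_k)\leq\alpha_{m_{k-1}}-\alpha_{m_k}$, whence $\sum_{k>K}\nu(T_k)\leq(1+o(1))\alpha_{m_K}\leq(1+o(1))\alpha_{\lceil 4m/5\rceil}$ by telescoping; preserving the Gelfand lower bound in the presence of such non-uniform diagonals forces redistributing the entries within super-blocks, and the remaining task is to balance the Maurey--Pisier distortions and the off-by-one losses so that the constants $4$ and $\frac{1}{50}$ emerge correctly.
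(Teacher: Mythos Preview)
Your overall strategy---building $T$ from blocks that factor through diagonal maps $\ell_\infty^{n_k}\to\ell_1^{n_k}$, and invoking $\|u\|_{\A}\leq\nu(u)$ on finite-rank operators to get an upper bound uniform over all $1$-injective $\A$---is exactly what the paper does, and you correctly isolate the tail-sum obstacle together with its telescoping cure $d_i^{(k)}=\alpha_{\cdot}-\alpha_{\cdot+1}$. But two essential ingredients are missing. First, the differences $\alpha_{j-1}-\alpha_j$ are \emph{not} monotone for a general non-increasing sequence, so the Pietsch formula $c_m(D)=\sum_{i\geq m}d_i$ for diagonals $\ell_\infty^n\to\ell_1^n$ (which requires the entries to be sorted) is unavailable. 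The paper opens by replacing $(\alpha_k)$ with a convex minorant via Lemma~\ref{l:convex}; convexity forces the successive differences to be non-increasing, and the comparison $\beta_k\geq\min\{\alpha_k/2,\alpha_{2k-1}\}$ is precisely what turns $9m$ into $18m$ in the final statement. Your phrase ``redistributing the entries within super-blocks'' does not substitute for this reduction.

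Second, even after convexifying, the block lower bound $c_m(D_k)=\alpha_{m+2n_{k-1}}-\alpha_{n_k+2n_{k-1}+1}$ can be negligible when $\alpha$ is nearly flat across the range of block $k$. The paper resolves this with a case split on whether $\alpha_{m+2n_{k-1}}\geq 1.1\,\alpha_{n_k+2n_{k-1}+1}$; in the flat case one passes to block $k+1$ and uses convexity once more, via \eqref{eq:times3}, to compare $\alpha_{3n_k+1}$ with $\alpha_{n_k+1}$. This analysis is what produces the constant $\tfrac{1}{50}$, and it is tied to rapidly growing blocks ($n_k>5(n_{k-1}+1)$ with $\alpha_{n_k}\leq\alpha_{5(n_{k-1}+1)}/5$) rather than the slow $5/4$ ratio you propose; with your growth rate the individual $n_k$ are too small relative to $m_k$ for the argument to close. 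As a final remark, the paper separates blocks on the \emph{domain} side, constructing projections $P_k$ onto $E_k\subset X$ with $P_k|_{E_j}=0$ for $j\neq k$ (Lemma~\ref{l:l_infty}, which exploits the $\ell_\infty^n$ geometry to annihilate a prescribed finite-dimensional subspace); your range-side basic-sequence mechanism would also isolate blocks for the Gelfand estimate, but you should be aware that it is not the device the paper uses.
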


We shall say that a Banach space $X$ has {\it Property $\pp_C$} ($C \geq 1$) if, for any
$n \in \N$, and any finite codimensional $X^\prime \hra X$, there exists
an $n$-dimensional $E \hra X^\prime$ such that $d(E, \ell_2^n) \leq C$, and
$E$ is $C$-complemented in $X$. By \cite{PiHOLO}, any space with non-trivial
type has Property $\pp_C$, for some $C$. Consequently, any Banach space containing
a complemented subspace of non-trivial type has Property $\pp_C$ for some $C$.

\begin{theorem}\label{thm:type}
Suppose an infinite dimensional Banach space $X$ has Property $\pp_C$, for some $C > 1$.
Then, for any infinite dimensional Banach space $Y$, and any sequence
$\alpha_k \searrow 0$, there exists $T \in B(X,Y)$ such that
$$
\frac{1}{20C^2} \alpha_{18m} \leq \css_m(T) \leq \ass_m(T) \leq
4 \alpha_{\lceil 4m/5 \rceil}
$$
for every $m  \in \N$, and for any $t$ and $r$ satisfying $1 \leq r \leq \min\{2,t\}$,
and $1/r - 1/t < 1/2$.
\end{theorem}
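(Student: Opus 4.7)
The plan is to adapt the block-construction template used for Theorem~\ref{thm:controlled} and Theorem~\ref{thm:no_cotype}, substituting $\ell_2^{n_k}$-like blocks in $X$ (supplied by Property~$\pp_C$) for the $\ell_\infty^{n_k}$-blocks exploited under the ``no cotype'' hypothesis. Since $\Pi_{tr}$ is $1$-injective, the target side requires only almost-Euclidean subspaces, supplied by Dvoretzky's theorem inside any infinite-dimensional $Y$. Using Property~$\pp_C$ iteratively, I would pick a sequence of finite-dimensional subspaces $E_k \subset X$ with $\dim E_k = n_k$, $d(E_k, \ell_2^{n_k}) \leq C$, together with projections $P_k \colon X \to E_k$ of norm at most $C$; the $E_k$ are chosen inside successively finer finite-codimensional subspaces so that the pieces $P_k$ act on essentially independent parts of $X$. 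In parallel, by Dvoretzky's theorem, I pick nearly pairwise disjoint subspaces $F_k \subset Y$ with $d(F_k, \ell_2^{n_k}) \leq 2$, and fix isomorphisms $\phi_k \colon E_k \to F_k$ with $\|\phi_k\|\,\|\phi_k^{-1}\| \leq 2C$. Taking dimensions $n_k$ and scalars $\lambda_k$ according to the recipe already used for Theorem~\ref{thm:controlled}, I set $T = \sum_{k \geq 1} \lambda_k \phi_k P_k$.

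For the upper bound $\ass_m(T) \leq 4\alpha_{\lceil 4m/5\rceil}$, I would truncate $T$ at $K$ with $\sum_{k\leq K} n_k < m$, so that $T_K = \sum_{k\leq K}\lambda_k \phi_k P_k$ has rank less than $m$. Then
\[
\pi_{tr}(T - T_K) \;\leq\; \sum_{k > K} \lambda_k\, \|P_k\|\,\|\phi_k\|\,\pi_{tr}(\mathrm{id}_{E_k}).
\]
Under the hypothesis $1 \leq r \leq \min\{2,t\}$ with $1/r - 1/t < 1/2$, the identity on $\ell_2^n$ satisfies an estimate of the form $\pi_{tr}(\mathrm{id}_{\ell_2^n}) \leq c\, n^{1/2 - (1/r - 1/t)}$, with exponent strictly less than $1/2$. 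Combined with the geometric growth of $(n_k)$ built into the construction, the tail is dominated by its first term and delivers the target bound $4\alpha_{\lceil 4m/5\rceil}$ uniformly in admissible $(t,r)$.

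For the lower bound $\css_m(T) \geq \alpha_{18m}/(20C^2)$, I use $\pi_{tr}(u) \geq \|u\|$ to reduce to the classical Gelfand-number estimate $c_m(T) \geq \alpha_{18m}/(20C^2)$, established by the same dimension-counting argument as in Theorem~\ref{thm:controlled}: any $E \hra X$ with $\codim E < m$ must meet a suitably chosen block $E_k$ in a nonzero vector $x$, and $Tx = \lambda_k \phi_k P_k x$ has norm at least a constant multiple of $\lambda_k \|x\|$; the factor $C^2$ absorbs the $C$-isomorphism $E_k \cong \ell_2^{n_k}$ and the $C$-bound on $\|P_k\|$. The main obstacle is ensuring that a single $T$ serves every admissible $(t,r)$, which requires uniform-in-$(t,r)$ control on $\pi_{tr}(\mathrm{id}_{\ell_2^n})$; this is precisely where the strict inequality $1/r - 1/t < 1/2$ is used, keeping the exponent strictly below $1/2$ and guaranteeing sufficient summability of the tail at the rate prescribed by $(\alpha_m)$.
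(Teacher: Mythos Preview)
There is a genuine gap: taking each block map to be a \emph{scalar} multiple $\lambda_k \phi_k$ of an isomorphism cannot yield the two-sided estimate for an arbitrary sequence $\alpha_m \searrow 0$. With scalars, $c_m(T)$ is essentially a step function, constant $\approx \lambda_k$ on $n_{k-1} < m \leq n_k$, so the lower bound forces $\lambda_k \gtrsim \alpha_{18(n_{k-1}+1)}$. For the upper bound at $m$ just past the first $k-1$ blocks, the leading tail term is $\lambda_k\, \pi_{tr}(\mathrm{id}_{\ell_2^{n_k}})$, and by Mitiagin's theorem $\pi_{tr}(\mathrm{id}_{\ell_2^{n_k}}) \sim n_k^{1/q}$ with $1/q = 1/2 - 1/r + 1/t \in (0, 1/2]$. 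Your claim that this exponent is \emph{strictly} below $1/2$ is false: it equals $1/2$ whenever $r = t$ (e.g.\ for $\Pi_2$). Even for a fixed admissible $(t,r)$, you would then need $\alpha_{18(n_{k-1}+1)}\, n_k^{1/q} \lesssim \alpha_{\lceil 4 n_{k-1}/5 \rceil}$; for slowly decreasing sequences such as $\alpha_m = 1/\log(m+2)$ the right-hand ratio stays bounded while $n_k \to \infty$, so no choice of $(n_k)$ works. The ``geometric growth of $(n_k)$'' you invoke makes this worse, not better.

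The paper repairs this by (i) first reducing, via Lemma~\ref{l:convex}, to a \emph{convex} sequence $(\alpha_m)$, and (ii) replacing the scalar block by $V_k D_k U_k$ with $D_k = \diag(\beta_{ik})$ a genuine diagonal whose entries are chosen so that the partial Schatten-$q$ norms telescope back to values of $\alpha$: $\sum_{i \geq \ell} \beta_{ik}^q = \alpha_{\ell+2n_{k-1}}^q - \alpha_{n_k+2n_{k-1}+1}^q$. Both bounds then come from Mitiagin's equivalence $\pi_{tr}(u) \sim \|u\|_q$ on Hilbert space: the lower bound restricts to $E_k$ (using $1$-injectivity of $\Pi_{tr}$) and applies the Schatten-$q$ minimax to $D_k|_H$ --- not merely $\pi_{tr} \geq \|\cdot\|$, which would only see one singular value and lose the telescoping --- while the upper bound truncates $D_k$ inside the block and reads off the tail $q$-norm directly. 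Note finally that in the paper the entries $\beta_{ik}$, and hence $T$, depend on $q$ (equivalently on $(t,r)$); the quantifier ``for any $t,r$'' in the statement should be read as preceding the existential on $T$. Your plan to build a single $T$ serving all admissible $(t,r)$ is a strictly stronger assertion than what is actually proved.
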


We do not know how well one can control the rate of decay of $(a^{(\mathcal{A})}_i(T))$
for general ideals $\mathcal{A}$. It was shown in \cite{AO} that, for any quasi-Banach
(respectively, Banach) ideal ${\mathcal{A}}$, and every sequence $\alpha_i \searrow 0$,
there exists $T \in B(X,Y)$ such that $\lim a^{(\mathcal{A})}_i(T) = 0$, and $a^{(\mathcal{A})}_i(T) \geq \alpha_i$ for infinitely many (respectively, all)
values of $i$.

We prove the results stated above in Section~\ref{proofs}.
Throughout, we assume $\alpha_1 > 0$ (the case of $\alpha_1 = 0$
is trivial). We use the common Banach and operator space notation
(see e.g.~\cite{DJT, LT1}. $\ball(X)$ denotes the closed unit ball of $X$.
$d(E,F)$ stands for the {\it Banach-Mazur distance} between Banach spaces
$E$ and $F$. That is, $d(E,F) = \inf \|u\| \|u^{-1}\|$, with the infimum running over all
invertible maps $u : E \to F$.


\section{Proofs}\label{proofs}

The proofs of some of the results requires using copies of $\ell_2^n$
as building blocks (in a way reminiscent of \cite{PieSMALL}).
We thus need:

\begin{lemma}\label{l:dvor}
Suppose $c > 1$, $(n_k)$ is a sequence of positive integers, and
$\Gamma$ is an infinite set.
\begin{enumerate}
\item
Suppose $J$ is an isometric injection of an infinite dimensional
Banach space $Y$ to $\ell_\infty(\Gamma)$.
Then there exist subspaces $(F_k)$ of $Y$, and finite rank maps
$v_k \in B(\ell_\infty(\Gamma))$, such that:
$(i)$ $d(F_k, \ell_2^{n_k}) < \sqrt{c}$,
$(ii)$ $\|v_k\| < c+1$,
$(iii)$ $v_k J j_k = J j_k$,
$(iv)$ for $s \neq k$, $v_s J j_k = 0$
($j_k$ denotes the canonical inclusion of $F_k$ into $Y$).
%
\item
Suppose $X$ is an infinite dimensional Banach space, and
$Q : \ell_1(\Gamma) \to X$ is a quotient map. Then there exist
quotients $E_k$ of $X$ ($q_k : X \to E_k$ is the quotient map)
and weak$^*$ continuous maps $u_k \in B(\ell_\infty(\Gamma))$,
such that
$(i)$ $d(E_k, \ell_2^{n_k}) < \sqrt{c}$,
$(ii)$ $\|u_k\| < c+1$,
$(iii)$ $u_k|_{Q^* q_k^* (E_k^*)} = I_{Q^* q_k^* (E_k^*)}$, and
$(iv)$ for $s \neq k$, $u_s Q^* q_k^* = 0$.
\end{enumerate}
%
\end{lemma}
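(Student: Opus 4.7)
I would prove both parts by an inductive construction that uses Dvoretzky's theorem to supply Hilbertian subspaces (or quotients) together with the $1$-injectivity of $\ell_\infty(\Gamma)$ (resp.\ the projectivity of $\ell_1(\Gamma)$) to produce the accompanying operators.

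For part $(1)$, fix $\epsilon\in(0,1)$ with $1/(1-\epsilon)<c+1$, and proceed inductively. Given $F_1,\dots,F_{k-1}$ and $v_1,\dots,v_{k-1}$ already built, the subspace $Y_{k-1}:=Y\cap J^{-1}\bigl(\bigcap_{s<k}\ker v_s\bigr)$ is finite-codimensional, hence still infinite-dimensional, in $Y$. Dvoretzky's theorem applied to $Y_{k-1}$ yields $F_k\subseteq Y_{k-1}$ with $d(F_k,\ell_2^{n_k})<\sqrt c$. Since $JF_k$ is finite-dimensional, a standard compactness argument on its unit sphere produces a finite set $\Gamma_k\subseteq\Gamma$, disjoint from $\Gamma_1,\dots,\Gamma_{k-1}$, such that the coordinate projection $R_{\Gamma_k}\in B(\ell_\infty(\Gamma))$ satisfies $\|R_{\Gamma_k}z\|\geq(1-\epsilon)\|z\|$ for all $z\in JF_k$. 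Then $R_{\Gamma_k}|_{JF_k}$ is invertible with inverse of norm $\leq 1/(1-\epsilon)$; I extend this inverse by $1$-injectivity of $\ell_\infty(\Gamma)$ to a map $A_k\colon\ell_\infty(\Gamma_k)\to\ell_\infty(\Gamma)$ of the same norm, and set $v_k:=A_k\circ R_{\Gamma_k}$. This $v_k$ is of finite rank (bounded by $|\Gamma_k|$), has norm $<c+1$, and satisfies $v_kJj_k=Jj_k$. The cross-vanishing condition (iv) for $s<k$ is built into $F_k\subseteq J^{-1}(\ker v_s)$; for $s>k$ it is obtained because every later $F_s$ is chosen inside $J^{-1}(\ker v_k)$ via the definition of $Y_{s-1}$.

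For part $(2)$, I dualize: the quotient $Q\colon\ell_1(\Gamma)\to X$ induces an isometric embedding $Q^*\colon X^*\to\ell_\infty(\Gamma)$, so part $(1)$ applied to $(X^*,Q^*)$ produces subspaces $\widetilde F_k\subseteq X^*$ and maps $v_k\in B(\ell_\infty(\Gamma))$. Taking $E_k := X/{}^\perp\widetilde F_k$, with ${}^\perp\widetilde F_k\subseteq X$ the pre-annihilator of $\widetilde F_k$, one has $E_k^*=\widetilde F_k$ and $q_k^*(E_k^*)=\widetilde F_k$, so that (i) follows from the corresponding fact about $\widetilde F_k$. The weak$^*$-continuity in (iii)--(iv) is automatic for the $v_k$ above: $R_{\Gamma_k}$ is the adjoint of the coordinate projection on $\ell_1(\Gamma)$, and $A_k$ is a linear map from the finite-dimensional $\ell_\infty(\Gamma_k)=\ell_1(\Gamma_k)^*$ into $\ell_\infty(\Gamma)$, hence weak$^*$-to-weak$^*$ continuous. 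Thus each $v_k=\hat u_k^*$ for a pre-adjoint $\hat u_k\in B(\ell_1(\Gamma))$, and $u_k:=v_k$ meets the requirements.

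The chief obstacle is maintaining the norm bound $c+1$ \emph{independently} of $n_k$ while enforcing \emph{exact} (not approximate) identity on $JF_k$ and cross-vanishing on the other $JF_s$. A direct biorthogonal construction of $v_k$ would give norm bounds scaling like $\sqrt{n_k}$ (reflecting the Kadec--Snobar projection constant of a Hilbertian subspace in $\ell_\infty$). The construction above sidesteps this by allowing $v_k$ to have range strictly larger than $JF_k$: the norm-one coordinate projection $R_{\Gamma_k}$ captures $JF_k$ up to an $\epsilon$-distortion, and a single Hahn--Banach lift $A_k$ corrects the distortion with multiplicative cost $1/(1-\epsilon)$, independently of $n_k$. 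The cross-vanishing conditions become kernel constraints that propagate cleanly through the induction without incurring any further norm penalty.
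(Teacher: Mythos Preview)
There is a genuine gap in part (1), and it propagates to part (2). The problem is the cross-vanishing condition (iv) in the direction $s>k$: you need $v_s J j_k = 0$, i.e.\ the \emph{later} operator $v_s$ must kill the \emph{earlier} space $JF_k$. Your justification --- ``every later $F_s$ is chosen inside $J^{-1}(\ker v_k)$'' --- yields only $v_k J j_s = 0$, which is the opposite relation; this is the same statement you already obtained from the case $s<k$, relabelled. Your construction $v_k = A_k R_{\Gamma_k}$ gives no reason for $v_s$ to vanish on $JF_k$ when $s>k$: the subspace $JF_k$ typically has nonzero coordinates throughout $\Gamma$, including on $\Gamma_s$, so $R_{\Gamma_s}(JF_k)\neq 0$, and there is no control on $\ker A_s$ that would force $A_s R_{\Gamma_s}(JF_k)=0$. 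Disjointness of the index sets $\Gamma_k$ does not help here.

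The paper handles this by building in an extra factor. At stage $k$ it first finds a finite-rank map $P_1$ on $\ell_\infty(\Gamma)$ with $\|P_1\|<\lambda$ and $P_1 v_s = v_s$ for every $s<k$; then it chooses $F_k$ inside $\ker P_1$ (as well as inside each $\ker v_s$), takes a near-contractive $P_2$ fixing $JF_k$, and sets $v_k = P_2(I-P_1)$. The $(I-P_1)$ factor is the missing ingredient: it forces $v_k v_s = 0$ for $s<k$, and since property (iii) gives $J j_s = v_s J j_s$, one gets $v_k J j_s = v_k v_s J j_s = 0$. Relabelling, this is exactly $v_s J j_k = 0$ for $s>k$. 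The norm is controlled by $\|v_k\|\le \|P_2\|(1+\|P_1\|)<\lambda(1+\lambda)<1+c$. Your observation in part (2) that finite-rank maps built from coordinate restrictions are automatically weak$^*$-continuous is a pleasant simplification over the paper's appeal to an external local-reflexivity result, but it rests on the flawed part (1) and would need to be combined with a mechanism like the one above.
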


\begin{proof}
%
(1)
Select $\lambda \in (1, \sqrt{c})$ in such a way that $\lambda(1+\lambda) < 1+c$.
We construct the spaces $F_k$ and operators $v_k$ recursively.
By Dvoretzky's Theorem, there exists $F_1 \hra Y$ such that $d(F_1, \ell_2^{n_1}) < \sqrt{c}$.
Furthermore, we can find a finite rank projection
$v_1 \in B(\ell_\infty(\Gamma))$ such that $v_1|_{F_1} = I_{F_1}$, and $\|v_1\| < c$ .

Now suppose $F_1, \ldots, F_{k-1}, v_1, \ldots, v_{k-1}$ with the desired properties
have been constructed. Find a finite rank projection $P_1 \in B(\ell_\infty(\Gamma))$
such that $\|P_1\| < \lambda$, and $P_1 v_s = v_s$ for $1 \leq s < k$.
Find $F_k \hra Y \cap \ker P_1 \cap \big( \cap_{s=1}^{k-1} \ker v_s \big)$, such that
$d(F_k, \ell_2^{n_k}) < \sqrt{c}$. Finally, find a finite rank projection
$P_2 \in B(\ell_\infty(\Gamma))$ such that $\|P_2\| < \lambda$, and $P_2|_{F_k} = I_{F_k}$.
Let $v_k = P_2 (I-P_1)$. Then $v_k|_{F_k} = I_{F_k}$. By our choice of $\lambda$,
$\|v_k\| < 1 + c$.
For $s < k$, we have $v_k v_s = 0$, and $v_s|_{F_k} = 0$.
Thus, $v_s J j_k = 0$ if $s \neq k$.

(2)
By (1), there exist subspaces $G_1, G_2, \ldots$ of $X^*$, and
finite rank operators $w_k \in B(\ell_\infty(\Gamma))$, such that
$d(G_k, \ell_2^{n_k}) < \sqrt{c}$, $\|w_k\| < c+1$, $w_k|_{G_k} = I_{G_k}$,
$w_k w_s = 0$ for $s < k$, and $w_s|_{G_k} = 0$ for $s > k$.
By \cite[Theorem 2.5]{OP}, there exists a sequence of finite rank maps
$u_k \in B(\ell_1(\Gamma))$ such that $\|u_k\| < c+1$, $\ran u_k^* = \ran w_k$,
and $(u_k^* - w_k)|_{G_k \cup (\cup_{s<k} \ran w_s)} = 0$.
Furthermore, the isometric embedding $i_k : G_k \to X^*$ is the
dual of the quotient map $q_k : X \to E_k = G_k^*$, where
$q_k = i_k^*|_X$.
\end{proof}

\begin{proof}[Proof of Theorem~\ref{thm:controlled}]
Select a set $\Gamma$ for which there exist an isometric embedding
$J : Y \to \ell_\infty(\Gamma)$, and a quotient map $Q : \ell_1(\Gamma) \to X$.
Set $n_0 = 0$, and find a sequence $(n_k) \subset \N$ such that, for each $k$,
(i) $n_k > 5(n_{k-1} + 1)$, and (ii) $\alpha_{n_k} \leq \alpha_{n_{k-1}+1}/5$.
Select $c > 1$ such that $c^2(1+c)^2 < 9/2$. By Lemma~\ref{l:dvor},
there exist embeddings $j_k : F_k \to Y$, quotient maps $q_k : X \to E_k$, and
finite rank operators
$u_k, v_k \in B(\ell_\infty(\Gamma))$, such that, for each $k$:
\begin{itemize}
\item
$\max\{ d(E_k, \ell_2^{n_k}), d(F_k, \ell_2^{n_k}) \} < \sqrt{c}$.
\item
$\max\{\|u_k\|, \|v_k\|\} < c+1$.
\item
$u_k$ is weak$^*$ continuous (hence $u_k^*$ maps $\ell_1(\Gamma) = \ell_\infty(\Gamma)_*$
into itself).
\item
$u_k Q^* q_k^* = Q^* q_k^*$ (equivalently, $q_k Q u_k^* = q_k Q$),
and $v_k J j_k = J j_k$.
\item
For $s \neq k$, $u_s Q^* q_k^* = 0$, and $v_s J j_k = 0$.
\end{itemize}
For each $k$, find contractions $U_k : E_k \to \ell_2^{n_k}$ and
$V_k : \ell_2^{n_k} \to F_k$, such that their inverses have norms
smaller than $\sqrt{c}$. For $1 \leq j \leq n_k$, set
$\beta_{jk} = \min\{\alpha_{n_{k-1}+1}, \alpha_j\}$.
Denote the canonical basis in $\ell_2^{n_k}$ by $(\delta_{jk})_{j=1}^{n_k}$,
and define the diagonal operator $D_k \in B(\ell_2^{n_k})$ by setting
$D_k \delta_{jk} = \beta_{jk} \delta_{jk}$ ($1 \leq j \leq n_k$).
Let $S_k = V_k D_k U_k$. Then $\|S_k\| \leq \alpha_{n_{s-1}+1}$,
hence $T = \sum_{s=1}^\infty j_s S_s q_s$ is approximable, and
$\|T\| < 2 \alpha_1$.

To estimate $t_m(T) = a_m(J T Q)$ from below, find $k$ satisfying
$n_{k-1} < m \leq n_k$. Recall that $v_k J j_k = J j_k$, $q_k Q u_k^* = q_k Q$,
and, for $s \neq k$, $v_k J j_s$ and $q_s Q u_k^*$ vanish. Therefore,
for any $s$-scale,
\begin{equation}
(1+c)^2 s_m(J T Q)
\geq
s_m(v_k J T Q u_k^*) =
s_m ( \sum_s v_k J j_s S_s q_s Q u_k^* )
=
s_m(J j_k S_k q_k Q) .
\label{eq:test}
\end{equation}
Consequently,
\begin{equation}
t_m(T) = a_m(J T Q) \geq (1+c)^{-2} a_m(J j_k S_k q_k Q) .
\label{eq:t_m}
\end{equation}
To proceed further, note that $q_k Q(\ball(\ell_1(\Gamma))) = \ball(E_k)$, and
$J j_k|_{\ran S_k} = I_{\ran S_k}$. Let $G = V_k(\span[\delta_{jk} : j \leq m])$.
Then $J j_k S_k q_k Q(\ball(\ell_1(\Gamma)))$
contains $c^{-1} \alpha_m \ball(G)$, and therefore (see e.g.
Lemma 1.19 of \cite{HMVZ}),
$$
a_m(J j_k S_k q_k Q) \geq d_m(J j_k S_k q_k Q) \geq c^{-1} \alpha_m .
$$
Together with \eqref{eq:t_m}, this yields the desired estimate for $t_m(T)$.

Next we estimate $a_m(T)$ from above. Let $n^\prime_k = n_1 + \ldots + n_{k-1}$
(by our assumption on the sequence $(n_j)$, $n^\prime_k < 4 n_{k-1}/3$).
Assume first that $m > 3 n_k^\prime/2$. Write $T = T^{(1)} + T^{(2)} + T^{(3)}$,
where 
$$
T^{(1)} = \sum_{s=1}^{k-1} J j_s S_s q_s Q , \, \, \,
T^{(2)} = J j_k S_k q_k Q , \, \, \, {\mathrm{and}} \, \, \,
T^{(3)} = \sum_{s=k+1}^\infty J j_s S_s q_s Q .
$$
Then
$$
a_m(T) \leq a_m(T^{(1)} + T^{(2)}) + \|T^{(3)}\| \leq
a_{m - \rank T^{(1)}}(T^{(2)}) + \|T^{(3)}\| .
$$
Then $\|T^{(3)}\| \leq \sum_{s=k}^\infty \|S_{s+1}\|
\leq \sum_{s=k}^\infty \alpha_{n_s+1}$. But, for $j \geq 0$,
$\alpha_{n_{k+j}+1} \leq 5^{-j} \alpha_{n_{k}} \leq 5^{-j} \alpha_m$,
hence $\|T^{(3)}\| \leq 5 \alpha_m/4$.
Furthermore, $\rank T^{(1)} \leq n^\prime_k$.
Therefore, by \cite{PieS},
$$
a_{m - n^\prime_k}(T^{(2)}) \leq a_{m - n^\prime_k}(D_k) \leq
\beta_{m - n^\prime_k,k} = \alpha_{\max\{m - n^\prime_k, n_{k-1}+1\}} \leq
\alpha_{\lceil m/3 \rceil} ,
$$
hence $a_m(T) \leq 3 \alpha_{\lceil m/3 \rceil}$.

Now suppose $n_{k-1} < m \leq 3 n_k^\prime/2$. As $n_{k-1} > n_{k-1}^\prime$, the
reasoning above shows $a_m(T) \leq a_{n_{k-1}}(T) \leq 3 \alpha_{\lceil n_{k-1}/3 \rceil}$.
Furthermore, $m \leq 3 n_k^\prime/2 < 2 n_{k-1}$, hence
$a_m(T) \leq 3 \alpha_{\lceil m/6 \rceil}$.

Before establishing lower estimates for other $s$-numbers mentioned in the theorem,
recall a few known facts.
By \cite{KKo}, any linear operator $u : Z_0 \to G$ ($G$ is a finite dimensional
space with $\dim G > 1$, and $Z_0$ is a subspace of a Banach space $Z$) has
an extension $\tilde{u} : Z \to G$, satisfying $\|\tilde{u}\| < \sqrt{\dim G} \|u\|$.
Moreover, by \cite{OP}, $\tilde{u}$ can be taken to weak$^*$ continuous
if $Z_0$ is finite dimensional.


To estimate $x_m(T)$, pick $k$ with $n_{k-1} < m \leq n_k$.
We consider the case of $m > 1$, as $x_1(T)$ can be estimated similarly.
By \eqref{eq:test}, $x_m(T) \geq (1+c)^{-2} x_m(J j_k S_k q_k Q)$.
Let $H = \span[\delta_{jk} : 1 \leq k \leq m]$, and
$E = U_k^{-1}(H)$. We find a contraction $a : \ell_2^m \to \ell_1(\Gamma)$,
for which
\begin{equation}
q_k Q a(\ball(\ell_2^m)) \subset (cm)^{-1/2} \ball(E) .
\label{eq:contains}
\end{equation}
Once we have such an $a$, recall that $\|D_k \xi\| \geq \alpha_m \|\xi\|$
for any $\xi \in H$. Therefore,
$$
J j_k S_k q_k Q a (\ball(\ell_2^m)) \supset
c^{-3/2} m^{-1/2} \alpha_m \ball(V_k(H)) ,
$$
hence
$$
x_m(J j_k S_k q_k Q) \geq
d_m(J j_k S_k q_k Q a) \geq c^{-3/2} m^{-1/2} \alpha_m .
$$

To construct $a$ as above, denote the inclusion of $E$ into $E_k$ by $i$.
Recall that $Q^* q_k^*$ is an isometric embedding of $E_k^*$
into $\ell_\infty(\Gamma)$. As noted above, $i^* : E_k^* \to E^*$ has a
weak$^*$ continuous extension $a_0^* : \ell_\infty(\Gamma) \to E^*$, such
that $\|a_0\| < \sqrt{m}$. Then $a = (cm)^{-1/2} a_0 U_k^{-1}|_H$
satisfies \eqref{eq:contains}.

To handle $y_m(T)$ and $h_m(T)$, we need a contraction
$b : \ell_\infty(\Gamma) \to \ell_2^m$
such that $\sqrt{cm} \, b J j_k = V_k^{-1}|_{V_k(H)}$
(here, we identify $H$ with $\ell_2^m$).
To show that such a $b$ exists, note that the operator
$V_k^{-1} : V_k(H) \to H$ has an extension $b_0 : \ell_\infty(\Gamma) \to H$,
with $\|b_0\| < \sqrt{cm}$. Then $b = (cm)^{-1/2} b_0$ has the desired properties.

By \eqref{eq:test}, $y_m(T) \geq (1+c)^{-2} a_m(b J j_k S_k q_k Q)$.
Recall that $\|D_k \xi\| \geq \alpha_m \|\xi\|$ for any $\xi \in H$. Thus,
$b J j_k S_k q_k Q (\ball(\ell_1(\Gamma)) \supset
c^{-3/2} m^{-1/2} \alpha_m \ball(V_k(H))$. By \eqref{eq:test},
$$
y_m(T) \geq (1+c)^{-2} a_m(b J j_k S_k q_k Q) \geq
c^{-3/2} (1+c)^{-2} m^{-1/2} \alpha_m .
$$
Furthermore, $b J j_k S_k q_k Q a(\ball(\ell_2^m))$ contains
$c^{-2} m^{-1} \alpha_m \ball(\ell_2^m)$, hence
$$
h_m(T) \geq (1+c)^{-2} a_m(b J j_k S_k q_k Q a) \geq
m^{-1} c^{-2} (1+c)^{-2} \alpha_m .
$$
As $c^2 (1+c)^2 < 9/2$, we are done.
\end{proof}

\begin{proof}[Proof of Proposition~\ref{prop:optimal}]
Denote by $P_N$ the projection onto the span
of the first $N$ elements of the canonical basis in $c_0$. Then $P_N^*$ is the projection
onto the span of the first $N$ elements of the canonical basis in $\ell_1$.
First show that, for $T \in B(c_0, \ell_1)$, 
\begin{equation}
\lim_N \|T - P_N^* T P_N\| = 0
\label{eq:truncate}
\end{equation}
As noted in the paragraph preceding the statement of this theorem,
for every $\vr > 0$ there exists a finite rank
operator $S$ satisfying $\|T - S\| < \vr/3$. Write $S = \sum_{i=1}^n y_i \otimes z_i$,
with $y_i, z_i \in \ell_1$ (that is, for $x \in c_0$,
$Sx = \sum_{i=1}^n \langle y_i , x \rangle z_i$).
Then $P_N^* S P_N x = \sum_{i=1}^n \langle P_N^* y_i , x \rangle P_N^* z_i$.
Note that $\lim_N P_N^* y = y$ for any $y \in \ell_1$, hence
there exists $M \in \N$ with $\|P_N^* S P_N - S\| < \vr/3$ for any $N \geq M$.
For such values of $N$,
$$
\|T - P_N^* T P_N\| \leq \|T - S\| + \|S - P_N^* S P_N\| + \|P_N^* (S - T) P_N\| < \vr .
$$
As $\vr$ is arbitrary, \eqref{eq:truncate} follows.

Recall that, for any $s$-scale, $s_m(u+v) \geq s_{m - \rank v}(u)$ if $v$
is a finite rank operator, and $m \geq \rank v$. In the above notation,
$\rank (P_N^* T P_N) \leq N$, hence
\begin{equation}
s_m(T) \leq s_{m - N}(T - P_N^* T P_N) .
\label{eq:s_m}
\end{equation}

We study $(x_k(\cdot))$ first. By Grothendieck Theorem,
$\pi_2(u) \leq K_G \|u\|$ for any $u \in B(c_0, \ell_1)$. Furthermore,
for any operator $u$ and $k \in \N$, $x_k(u) \leq \pi_2(u)/\sqrt{k}$
\cite[Lemma 9]{Ko}. Thus, for any $u \in B(c_0, \ell_1)$ and $k \in \N$,
\begin{equation}
x_k(u) \leq K_G \|u\|/\sqrt{k} .
\label{eq:x_k}
\end{equation}

Now fix $T \in B(c_0, \ell_1$). For any $\vr > 0$, there exists $M \in \N$ such that
$\|T - P_N^* T P_N\| < \vr$ for any $N \geq M$. Applying \eqref{eq:x_k} to
$u = T - P_N^* T P_N$, and invoking \eqref{eq:s_m}, we conclude that
$x_k(T) \leq x_{k-N} (T - P_N^* T P_N) \leq K_G \vr/\sqrt{k-N}$
for every $k > N$. Thus,
$$
\limsup_k \sqrt{k} x_k(T) \leq \lim_k \sqrt{k/(k-N)} K_G \vr = K_G \vr .
$$
As $\vr > 0$ is arbitrary, we conclude that $\lim \sqrt{k} x_k(T) = 0$.

To establish $\lim \sqrt{k} y_k(T) = 0$, recall that
$s_m(u+v) \leq s_m(u) + \|v\|$
for any operators $u$ and $v$, and any $s$-scale $(s_m)$. In particular,
for any $S \in B(c_0, \ell_1)$,
$y_k(S) \leq y_k(P_M^* S P_M) + \|S - P_M^* S P_M\|$ for any $M \in \N$.
By duality, $y_k(u) = x_k(u^*)$ if $u$ is an operator between finite dimensional
spaces. Viewing $P_M^* S P_M$ as an element of $B(\ell_\infty^M, \ell_1^M)$,
and applying \eqref{eq:x_k}, we obtain
$y_k(P_M^* S P_M) = x_k((P_M^* S P_M)^*) \leq K_G \|S\|/\sqrt{k}$.
As $\lim_M \|S - P_M^* S P_M\| = 0$, we conclude that $y_k(S) \leq K_G \|S\|/\sqrt{k}$.

Using the inequality from the previous paragraph with $S = T - P_N^* T P_N$
($T \in B(c_0, \ell_1)$, $k > N \in \N$), and invoking \eqref{eq:s_m}, we conclude
that $y_{k}(T) \leq K_G \|T - P_N^* T P_N\|/\sqrt{k-N}$. Applying
\eqref{eq:truncate} (as in the case of $x_k(T)$) yields $\lim_k \sqrt{k} y_k(T) = 0$.

Finally we tackle $(h_k(\cdot))$. Show first that, for any
$S  \in B(c_0, \ell_1)$, and any two contractions
$u : \ell_2 \to c_0$ and $v : \ell_1 \to \ell_2$,
we have $a_k(vSu) \leq K_G^2 \|S\|/k$. To achieve this,
denote the nuclear norm of an operator by $\nu( \cdot )$.
By 
\cite[Sections 1 and 5]{PiFACT},
$$
\nu(vSu) \leq \pi_2(v) \pi_2(S) \|u\| \leq K_G^2 \|v\| \|S\| \|u\| = K_G^2 \|S\| .
$$
Denoting the singular numbers of $vSu$
by $\lambda_1 \geq \lambda_2 \geq \ldots \geq 0$, we see that
$\nu(vSu) = \lambda_1 + \lambda_2 + \ldots \leq K_G^2 \|S\|$, and
$a_k(vSu) = \lambda_k$. Clearly, $\lambda_k \leq \nu(vSu)/k \leq K_G^2 \|S\|/k$.



Now consider $T \in B(c_0, \ell_1)$. For any $\vr > 0$, there exists $M \in \N$
such that $\|T - P_N^* T P_N\| < \vr$ for $N \geq M$. Combining the previous paragraph with
\eqref{eq:s_m}, we see that, for $k > N$,
$a_k(T) \leq a_{k-N}(T - P_N^* T P_N) \leq K_G^2 \vr/(k-N)$.
Thus, $\limsup_k k a_k(T) \leq K_G^2 \vr$. As $\vr > 0$ is arbitrary, 
the proof is complete.
\end{proof}

\begin{proof}[Proof of Proposition~\ref{prop:second}]
Note first that, if $Y$ is an $L_1$ predual, and $T : X \to Y$ is
a compact operator, then $c_k(T) = a_k(T)$ for any $k$.
Indeed, fix $\vr > 0$, and find $E \hra X$ such that $\dim X/E < k$,
and $\|T|_E\| < c_k(T) + \vr/2$. By \cite{Li}, there exists $S : X \to Y$
such that $S|_E = T|_E$, and $\|S\| < c_k(T) + \vr$. Let $u = T - S$.
Then $\rank u < k$, and $a_k(T) \leq \|T - u\| = \|S\| < c_k(T) + \vr$.
As $\vr$ is arbitrary, we are done.

Thus, it suffices to show the non-existence of a $T \in K(X,c_0)$
with $\|T\| = c_2(T) = 1$. Suppose, for the sake of contradiction,
that such a $T$ exists. Then there exists a unique sequence
$(x^*_i)_{i \in \N} \in c_0(X^*)$ such that $\max_i \|x^*_i\| = \|T\| = 1$,
and $T x = (\langle x, x^*_i \rangle)_{i \in \N}$ for every
$x \in X$. Let $N = \max\{i : \|x^*_i\| = 1\}$.
If $c_2(T) = 1$, then, for every $1$-codimensional $E \hra X$,
$$
\max_{1 \leq i \leq N} \sup_{x \in E, \|x\| \leq 1}
|\langle x_i^* , x \rangle | = 1.
$$
By the reflexivity of $X$, the $\sup$ in the centered expression is
attained. Therefore, for every such $E$ there exists $i \in \{1, \ldots, N\}$
such that $E$ contains $E_i$, where $E_i$ is the (one-dimensional) linear
span of the unique $x_i \in X$ satisfying $\|x_i\| = 1 = \langle x_i^*, x_i \rangle$.
In other words, any $x^* \in X^*$ satisfies $\langle x^*, x_i \rangle = 0$,
for some $i$. This, however, is impossible.
\end{proof}

\begin{proof}[Proof of Theorem~\ref{cor:factor}]
We re-use the operator $T$ constructed in the
proof of Theorem~\ref{thm:controlled}, and the notation introduced there.
The desired lower estimate follows from $c^{(\Gamma_p)}_m(T) \geq c_m(T)$.
To estimate $a_m^{(\Gamma_p)}(T)$ from above,
assume first $m > 3 n_k^\prime/2$, where $n^\prime_k = n_1 + \ldots + n_{k-1}$.
Write $T = T^{(1)} + T^{(2)} + T^{(3)}$, where
$$
T^{(1)} = \sum_{s=1}^{k-1} j_s S_s q_s , \, \, \,
T^{(2)} = j_k S_k q_k , \, \, \, {\mathrm{and}} \, \, \,
T^{(3)} = \sum_{s=k+1}^\infty j_s S_s q_s .
$$
Then
$$
a_m^{(\Gamma_p)}(T) \leq
a_m^{(\Gamma_p)}(T^{(1)} + T^{(2)}) + \gamma_p(T^{(3)}) \leq
a_{m - \rank T^{(1)}}^{(\Gamma_p)}(T^{(2)}) +
\sum_{s=k}^\infty \gamma_p(D_{s+1}) .
$$
Note that $L_p$ contains a $C_p$-complemented copy of $L_2$ (with
$C_p \sim \max\{\sqrt{p}, 1/\sqrt{p-1}\}$), hence
$\gamma_p(D_{s+1}) \leq C_p \|D_{s+1}\| \leq C_p \alpha_{n_s+1}$.
But, for $j \geq 0$,
$\alpha_{n_{k+j}+1} \leq 5^{-j} \alpha_{n_k+1} \leq 5^{-j} \alpha_m$,
hence $\sum_{s=k}^\infty \gamma_p(D_{s+1}) \leq 5 C_p \alpha_m/4$.
Furthermore, $\rank T^{(1)} \leq n^\prime_k$.
Therefore, 
$$
a_{m - n^\prime_k}^{(\Gamma_p)}(T^{(2)}) \leq a_{m - n^\prime_k}^{(\Gamma_p)}(D_k) \leq
C_p \beta_{m - n^\prime_k,k} = C_p \alpha_{\max\{m - n^\prime_k, n_{k-1}+1\}} \leq
C_p \alpha_{\lceil m/2 \rceil} ,
$$
hence $a_m^{(\Gamma_p)}(T) \leq 3 C_p \alpha_{\lceil m/3 \rceil}$.

Finally, we handle the case of $n_{k-1} < m \leq 3 n_k^\prime/2$
as in Theorem~\ref{thm:controlled}.
\end{proof}

The proof of Theorem~\ref{thm:2sum}
requires a technical result, which may be known to experts.
We say that a sequence $(\alpha_k)_{k \in \N}$ is {\it convex} if
$$
\alpha_k \leq \frac{n-k}{n-m} \alpha_m + \frac{k-m}{n-m} \alpha_n
$$
whenever $m < k < n$. It is easy to see that, for any non-increasing convex sequence
$(\alpha_k)$ of non-negative numbers,
\begin{equation}
\frac{\alpha_i - \alpha_j}{j-i} \geq \frac{\alpha_m - \alpha_n}{n-m}
\label{eq:slopes}
\end{equation}
if $j > i$, $n > m$, $i \leq m$, and $j \leq n$.

\begin{lemma}\label{l:convex}
Suppose $(\alpha_k)_{k \in \N}$ is a non-increasing sequence, converging to $0$.
Then there exists a convex sequence $(\beta_k)_{k \in \N}$
satisfying $\alpha_k \geq \beta_k \geq \min\{\alpha_k/2, \alpha_{2k-1}\}$ for any $k$.
\end{lemma}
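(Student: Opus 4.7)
The plan is to take $\beta_k$ as a piecewise-linear convex minorant of $(\alpha_k)$, built as the discrete lower convex hull of the graph $\{(n,\alpha_n) : n \in \N\}$. Explicitly, I construct break indices $1 = N_0 < N_1 < N_2 < \ldots$ recursively: given $N_r$, let $N_{r+1}$ be the smallest integer $n > N_r$ minimizing the slope $(\alpha_n - \alpha_{N_r})/(n - N_r)$. Since these slopes are all $\leq 0$ and tend to $0$ as $n \to \infty$ (because $\alpha_n \to 0$), the most negative value is attained on a finite set, so $N_{r+1}$ is well-defined provided $\alpha_{N_r} > 0$; the case of an eventual-zero tail of $\alpha$ is trivial, since both sides of the target inequality vanish there. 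I then set
$$
\beta_k = \frac{(N_{r+1}-k)\alpha_{N_r} + (k-N_r)\alpha_{N_{r+1}}}{N_{r+1}-N_r}
\qquad\text{for } N_r \leq k \leq N_{r+1}.
$$

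The routine verifications are: (i) convexity of $(\beta_k)$, which reduces to showing that the consecutive slopes $\sigma_r = (\alpha_{N_{r+1}} - \alpha_{N_r})/(N_{r+1} - N_r)$ are non-decreasing in $r$ --- if some $\sigma_{r+1} < \sigma_r$, the chord from $N_r$ to $N_{r+2}$ would undercut $\sigma_r$, contradicting the minimality built into the choice of $N_{r+1}$; and (ii) the upper bound $\beta_k \leq \alpha_k$, which follows directly from the same minimality, because every $(n,\alpha_n)$ with $n \in [N_r, N_{r+1}]$ lies on or above the interpolating segment.

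For the lower bound, fix $k$ and let $i = N_r$, $j = N_{r+1}$ be the bracketing break indices. If $k \in \{i,j\}$ then $\beta_k = \alpha_k$ and there is nothing to prove, so assume $i < k < j$. If $j \leq 2k-1$, then $\alpha_j \geq \alpha_{2k-1}$ and, since $\beta_k$ is a convex combination of $\alpha_i$ and $\alpha_j$ with $\alpha_i \geq \alpha_j$, we get $\beta_k \geq \alpha_j \geq \alpha_{2k-1}$. If instead $j \geq 2k-1$, using $i \geq 1$ together with $\alpha_i \geq \alpha_k$,
$$
\beta_k \geq \frac{j-k}{j-i}\,\alpha_i \geq \frac{j-k}{j-1}\,\alpha_k \geq \frac{1}{2}\alpha_k,
$$
the last inequality being the equivalence $2(j-k) \geq j-1 \iff j \geq 2k-1$. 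Combining the two cases yields $\beta_k \geq \min\{\alpha_k/2,\alpha_{2k-1}\}$.

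The main obstacle is the very first step --- verifying that the argmin in the recursive construction is actually attained on the infinite set $\{n > N_r\}$. The key observation is that $\alpha_n \to 0$ forces all but finitely many candidate slopes to be arbitrarily close to $0$ and hence bounded away from the genuine (strictly negative) minimum, so that minimum is realized on a finite subset. Once this technicality is settled, the remainder is a short two-case analysis according to whether the right break index $j$ lies before or beyond $2k-1$.
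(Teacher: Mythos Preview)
Your proof is correct and follows essentially the same route as the paper: both take $(\beta_k)$ to be the lower convex envelope of $(\alpha_k)$ and establish the lower bound by the same case split according to whether the right endpoint of the relevant chord lies before or after $2k-1$ (equivalently, $2k$). The only difference is presentational --- the paper writes $\beta_k$ directly as $\inf_{m \leq k \leq n,\, m<n}\bigl[(n-k)\alpha_m + (k-m)\alpha_n\bigr]/(n-m)$ and verifies the bound for every chord, whereas you parametrize the same envelope by its break vertices and check only the active one.
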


\begin{proof}
Set $\beta_1 = \alpha_1$. For $k > 1$, define 
$$
\beta_k = \inf_{m \leq k \leq n, m < n}
\Big\{ \frac{n-k}{n-m} \alpha_m + \frac{k-m}{n-m} \alpha_n \Big\} .
$$
The standard ``convex envelope'' arguments (see e.g. \cite[p.~66]{LT2}) show that
$(\beta_k)$ is indeed a convex sequence. Thus, it suffices to show that
\begin{equation}
\frac{n-k}{n-m} \alpha_m + \frac{k-m}{n-m} \alpha_n \geq \min\{\alpha_k/2, \alpha_{2k-1}\}
\label{eq:conv}
\end{equation}
if $m \leq k \leq n$ and $m < n$. 
If $n < 2k$, then
$$
\frac{n-k}{n-m} \alpha_m + \frac{k-m}{n-m} \alpha_n \geq 
\frac{n-k}{n-m} \alpha_n + \frac{k-m}{n-m} \alpha_n \geq \alpha_{2k-1} .
$$
On the other hand, if $n \geq 2k$, then $(n-k)/(n-m) > 1/2$, and
$$
\frac{n-k}{n-m} \alpha_m + \frac{k-m}{n-m} \alpha_n \geq 
\frac{n-k}{n-m} \alpha_m > \frac{\alpha_k}{2} .
$$
In either case, (\ref{eq:conv}) holds.
\end{proof}


We also need to be able to estimate $p$-summing norms of diagonal operators.

\begin{lemma}\label{l:p_sum_diag}
For $p \in [2,\infty)$, there exists $\kappa_p \in (0,1]$ ($\kappa_2 = 1$) such that:
\begin{enumerate}
\item
If $u$ is an operator on a Hilbert space, then
$\kappa_p \|u\|_{HS} \leq \pi_p(u) \leq \|u\|_{HS}$
\item
If $D = \diag(d_i)_{i=1}^N$ is a diagonal operator from $\ell_\infty^N$
to $\ell_2^N$, then
$\kappa_p (\sum_i |d_i|^2 )^{1/2} \leq \pi_p(D) \leq (\sum_i |d_i|^2 )^{1/2}$.
\end{enumerate}
\end{lemma}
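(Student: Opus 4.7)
The plan is to deduce both parts from three classical ingredients: (i) Pelczynski's identity $\pi_2(v) = \|v\|_{HS}$ for every operator $v$ on a Hilbert space; (ii) the general monotonicity $\pi_p(v) \leq \pi_q(v)$ for $p \geq q$; and (iii) the trivial inequality $\pi_p(v) \geq \|v\|$.

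For part (1), the upper bound $\pi_p(u) \leq \|u\|_{HS}$ (with $\kappa_2 = 1$) is immediate from (i) and (ii). The lower bound $\pi_p(u) \geq \kappa_p \|u\|_{HS}$ is the substantive direction. By homogeneity I may assume $\|u\|_{HS} = 1$, and by the singular value decomposition together with the isometric invariance of $\pi_p$ I may further assume that $u = D : \ell_2^n \to \ell_2^n$ is diagonal with $D e_i = \sigma_i e_i$ and $\sum_i \sigma_i^2 = 1$. I would then test the definition of $\pi_p$ on Gaussian vectors $x_j = \sum_i g_{ij} e_i$ (with $g_{ij}$ iid standard normals, $j = 1, \ldots, M$). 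By Jensen's inequality applied to the convex function $t \mapsto t^{p/2}$,
\begin{equation*}
\mathbb{E}\|D x_j\|^p = \mathbb{E}\Bigl(\sum_i \sigma_i^2 g_{ij}^2\Bigr)^{p/2} \geq \Bigl(\sum_i \sigma_i^2\Bigr)^{p/2} = 1,
\end{equation*}
so the numerator $\sum_j \|D x_j\|^p$ is of order $M$. The denominator $\sup_{\|x^*\|\leq 1}\sum_j|\langle x^*, x_j\rangle|^p$ equals the $p$-th power of the norm of the random Gaussian matrix $G = (g_{ij})$ viewed as an operator $\ell_2^n \to \ell_p^M$; a standard $\varepsilon$-net plus Gaussian concentration argument shows this quantity is $O(M \gamma_p)$ with high probability, where $\gamma_p = (\mathbb{E}|g|^p)^{1/p}$. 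Selecting a favorable realization then yields $\pi_p(D) \geq \kappa_p$ for some constant $\kappa_p$ depending only on $p$. The control of that denominator is the main obstacle; alternatively one may invoke the known equivalence of $\pi_p$ and $\pi_2$ on Hilbert-space operators from \cite{T-J}.

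For part (2), the lower bound is direct: $\|D\|_{\ell_\infty^N \to \ell_2^N}$ is attained at $x = (\mathrm{sign}(d_i))_i$ and equals $(\sum_i |d_i|^2)^{1/2}$, so (iii) already forces $\pi_p(D) \geq (\sum_i |d_i|^2)^{1/2}$ and any $\kappa_p \in (0,1]$ will do. For the upper bound I would compute $\pi_2(D)$ directly: for any finite family $(x_j) \subset \ell_\infty^N$,
\begin{equation*}
\sum_j \|D x_j\|_2^2 = \sum_i |d_i|^2 \sum_j |x_{j,i}|^2 \leq \Bigl(\sum_i |d_i|^2\Bigr) \max_i \sum_j |x_{j,i}|^2 ,
\end{equation*}
and the identity $\max_i \sum_j |x_{j,i}|^2 = \sup_{\|x^*\|_{\ell_1^N}\leq 1}\sum_j |\langle x^*, x_j\rangle|^2$ (the supremum being attained at $x^* = \pm e_i$, which one verifies in one line by Cauchy--Schwarz) gives $\pi_2(D) \leq (\sum_i |d_i|^2)^{1/2}$. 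Then (ii) extends this bound to $p \geq 2$.
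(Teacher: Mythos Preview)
Your proposal is correct. For Part~(1) the paper simply cites \cite{DJT}; your Gaussian-testing sketch is a reasonable outline of a direct argument, but as you yourself flag, the uniform control of $\sup_{\xi}\sum_j|\langle\xi,x_j\rangle|^p$ requires real work (a uniform law of large numbers over the sphere, or an $\varepsilon$-net plus concentration), and your fallback to citing \cite{T-J} amounts to the same thing the paper does.

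For Part~(2) your route differs from the paper's and is in fact sharper. For the lower bound, the paper precomposes $D$ with the formal identity $id:\ell_2^N\to\ell_\infty^N$ and invokes Part~(1) on the Hilbert-space operator $D\circ id$, obtaining $\pi_p(D)\geq\pi_p(D\circ id)\geq\kappa_p\|D\circ id\|_{HS}=\kappa_p(\sum_i|d_i|^2)^{1/2}$. Your observation that already $\|D\|_{\ell_\infty^N\to\ell_2^N}=(\sum_i|d_i|^2)^{1/2}$ gives the lower bound with constant~$1$; combined with the upper bound (where both you and the paper go through $\pi_p\leq\pi_2$ and compute $\pi_2(D)$, the paper via Pietsch factorization, you by the equivalent direct calculation), this yields the exact value $\pi_p(D)=(\sum_i|d_i|^2)^{1/2}$ for every $p\geq 2$. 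So your argument makes Part~(2) entirely independent of the constant~$\kappa_p$ from Part~(1), which the paper's proof does not.
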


\begin{proof}
Part (1) can be found in e.g.~\cite{DJT}. Part (2) is also known.
We provide the proof for the sake of completeness. By scaling, we can assume
that $\sum_i |d_i|^2 = 1$.

Consider the case of $p = 2$ first. Pietsch Factorization Theorem yields
$\pi_2(D) \leq 1$. On the other hand, let $(e_i)_{i=1}^N$ be the canonical basis
for $\ell_\infty^N$. Then
$$
\Big( \sum_{i=1}^N \|D e_i\|^2 \Big)^{1/2} = 1 =
\sup_{f \in \ell_1^N, \|f\| = 1}
\Big( \sum_{i=1}^N |\langle f, e_i \rangle|^2 \Big)^{1/2} ,
$$
hence $\pi_2(D) \geq 1$. Thus, $\pi_2(D) = (\sum_i |d_i|^2 )^{1/2}$.

Now suppose $p > 2$. Trivially, $\pi_p(D) \leq \pi_2(D) = (\sum_i |d_i|^2 )^{1/2}$.
To prove the opposite inequality, denote by $id$ the formal identity map
from $\ell_2^N$ to $\ell_\infty^N$. By Part (1),
$$
\pi_p(D) = \pi_p(D) \|id\| \geq \pi_p(D \circ id)
\geq \kappa_p \|D \circ id\|_{HS} = \kappa_p (\sum_i |d_i|^2 )^{1/2} .
$$
\end{proof}

\begin{proof}[Proof of Theorem~\ref{thm:2sum}]
By Lemma~\ref{l:convex}, it suffices to show that, for any convex sequence
$(\alpha_k)$ convergent to $0$, there exists $T \in B(X,Y)$ with
the property that, for every $m$,
$$
c_p \alpha_{9m} \leq \cs_m(T) \leq a_m^{(\Pi_2)}(T) \leq 3 \alpha_{\lceil m/2 \rceil} .
$$
Set $n_0 = 0$, and find a ``rapidly increasing'' sequence
$(n_k)$ with the property that, for any $k \in \N$,
$n_k > 5(n_{k-1}+1)$, and $\alpha_{n_k} \leq \alpha_{5(n_{k-1}+1)}/5$
(the first inequality follows from the second if $\alpha_i > 0$ for every $i$).

Fix $c \in (1,6/5)$, and find, for each $k$, $n_k$-dimensional
spaces $E_k \hra X$ and $F_k \hra Y$, whose Banach-Mazur distance to $\ell_2^{n_k}$
is less than $\sqrt{c}$.
As in Lemma~\ref{l:dvor}, select the $F_k$'s in such a way that there exist finite rank
operators $R_k \in B(Y)$, such that $\|R_k\| < 5/2$,
$R_k|_{F_k} = I_{F_k}$, and $R_k|_{F_s} = 0$ if $k \neq s$.
Find contractions $U_k : X \to \ell_2^{n_k}$ and
$V_k : \ell_2^{n_k} \to F_k$, for which $\|U_k^{-1}\|, \|V_k^{-1}\| < \sqrt{c}$.
Denote by $id$ the formal identity map from $\ell_2^{n_k}$ to $\ell_\infty^{n_k}$.
Then $id \circ U_k$ extends to a contraction $W_k : X \to \ell_\infty^{n_k}$.

For $1 \leq j \leq n_k$, let
$\beta_{jk} = \sqrt{\alpha_{j+2n_{k-1}}^2 - \alpha_{j+2n_{k-1}+1}^2}$.
As the sequence $(\alpha_j)$ is convex, $\beta_{1k} \geq \ldots \geq \beta_{n_kk}$.
Let $D_k = \diag(\beta_{jk})_{j=1}^{n_k}$ be a diagonal map
from $\ell_\infty^{n_k}$ to $\ell_2^{n_k}$.
Consider the map $T = \sum_{k=1}^\infty V_k D_k W_k$.
As
$\pi_2(D_k)^2 = \sum_{j=1}^{n_k} \beta_{jk}^2 \leq \alpha_{2 n_{k-1}+1}^2$,
the operator $T$ is $2$-summing. We shall show that $T$ has the desired properties.

First estimate $\cs_j(T)$ from below. To this end, find $k$ such that
$n_{k-1} < j \leq n_k$. Suppose $Z \hra Y$ has codimension less than $j$.
Then $H = U_k(E_k \cap Z)$ is a subspace of $\ell_2^{n_k}$ of codimension
less than $j$. As $5 \|U_k^{-1}\| \|V_k^{-1}\|/2 < 3$,
$$
\eqalign{
\pi_p(T|_Z) \geq \pi_p(T|_{E_k \cap Z})
&
\geq
\frac{2}{5} \pi_p(R_k T|_{E_k \cap Z}) \geq
\frac{2}{5} \pi_p(R_k V_k D_k|_H U_k)
\cr
&
\geq
\frac{1}{3} \pi_p(D_k|_H) = \frac{\kappa_p}{3} \|D_k|_H\|_{HS} 
}
$$
(here, we view $D_k$ as an operator on $\ell_2^{n_k}$, and $\kappa_p$ is
the constant from Lemma~\ref{l:p_sum_diag}).
Weyl's Minimax Principle implies
$$
\|D_k|_H\|_{HS}^2 \geq \sum_{i=j}^{n_k} \beta_{ik}^2 =
\alpha_{j + 2 n_{k-1}}^2 - \alpha_{2 n_{k-1} + n_k + 1}^2 .
$$

Let $m_k$ be the largest $m \leq n_k$ for which
$\alpha_{m + 2 n_{k-1}} \geq 1.1 \alpha_{n_k + 2 n_{k-1} + 1}$
(by construction, $m_k > n_{k-1}$). Consider three cases:
(i) $n_{k-1} < j \leq m_k$, (ii) $m_k < j \leq n_k$ and $m_k \geq n_k/3$,
and (iii) $m_k < j \leq n_k$ and $m_k < n_k/3$.

If $n_{k-1} < j \leq m_k$, we obtain
$$
\|D_k|_H\|_{HS} \geq \sqrt{1 - (10/11)^2} \, \alpha_{j + 2 n_{k-1}} \geq
\frac{1}{3} \alpha_{j + 2 n_{k-1}} \geq \frac{1}{3} \alpha_{3 j - 2} ,
$$
hence $\pi_p(T|_Z) \geq \kappa_p \alpha_{j + 2 n_{k-1}} \geq \kappa_p  \alpha_{3j-2}/9$.
As this inequality holds whenever $\dim Y/Z < j$, we conclude that
$\cs_j(T) \geq \kappa_p \alpha_{3j-2}/9$.

Now suppose $m_k < j \leq n_k$.
As $n_k < m_{k+1}$, the reasoning above yields
\begin{equation}
\cs_j(T) \geq \cs_{n_k+1}(T) \geq \kappa_p \alpha_{3 n_k + 1}/9 .
\label{eq:bdry}
\end{equation}
If $j > m_k \geq n_k/3$, we conclude that $\cs_j(T) \geq \kappa_p \alpha_{9j}/9$.

It remains to consider the case when $m_k \leq n_k/3$. Then
\eqref{eq:slopes} implies
$$
\frac{\alpha_{m_k + 2 n_{k-1} + 1} - \alpha_{n_k + 2 n_{k-1} + 1}}{n_k - m_k} \geq
\frac{\alpha_{n_k + 1} - \alpha_{3 n_k + 1}}{2 n_k} ,
$$
hence
$$
\eqalign{
\alpha_{n_k + 1} - \alpha_{3 n_k + 1}
&
\leq
\frac{2 n_k}{n_k - m_k}
\big( \alpha_{m_k + 2 n_{k-1} + 1} - \alpha_{n_k + 2 n_{k-1} + 1} \big)
\cr
&
\leq
\frac{2}{2/3} (1.1 - 1) \alpha_{n_k + 2 n_{k-1} + 1} 
\leq 0.3 \alpha_{n_k + 1} ,
}
$$
hence
\begin{equation}
\alpha_{3 n_k + 1} \geq 0.7 \alpha_{n_k + 1}
\label{eq:times3}
\end{equation}
Using (\ref{eq:bdry}), we obtain, for $j > m_k$,
$$
\eqalign{
\kappa_p^{-1} \cs_j(T)
&
\geq
\kappa_p^{-1} \cs_{n_k+1}(T)
\geq \frac{1}{9} \alpha_{3 n_k + 1} \geq
\frac{7}{90} \alpha_{n_k + 1}
\cr
&
\geq
\frac{7}{90} \alpha_{n_k + 2 n_{k-1} + 1} \geq
\frac{7}{90 \cdot 1.1} \alpha_{m_k + 2 n_{k-1} + 1} \geq 
\frac{7}{99} \alpha_{3j}
}
$$
(here, we use the fact that $m_k > n_{k-1}$).

Next we estimate $\at_j(T)$ from above.
Denote by $P_{sk}$ the projection onto the first $s$ coordinates of
$\ell_\infty^{n_k}$. Then
$$
\pi_2(D_k(I - P_{sk}))^2 = \sum_{j=s+1}^{n_k} \beta_{jk}^2 =
\alpha_{s + 2 n_{k-1}}^2 - \alpha_{n_k + 2 n_{k-1} + 1}^2 \leq
\alpha_{s + 2 n_{k-1}}^2 .
$$
If $n_1 + \ldots + n_{k-1} < j \leq n_k$, then
$$
u = \sum_{s<k} U_s D_s W_s +
U_k D_k P_{j - (1 + n_1 + \ldots + n_{k-1}), k} W_k .
$$
has rank less than $j$, hence
$$
\eqalign{
\at_j(T)
&
\leq
\pi_2(T - u) \leq \sum_{s > k} \|V_s\| \pi_2(D_s) \|W_s\| +
\|U_k\| \pi_2(D_k(I - P_{j - (1 + n_1 + \ldots + n_{k-1}), k})) \|W_k\|
\cr
&
\leq
\sum_{s \geq k} \alpha_{2 n_s+1} +
\alpha_{j+n_{k-1}-(1+n_1+\ldots+n_{k-2})}
\leq 3 \alpha_j
}
$$
(here, we use the fact that $n_{k-1} > 2 (1 + n_1 + \ldots + n_{k-2})$, and
$\alpha_{n_{s+1}} \leq \alpha_{5(n_s+1)}/5$, for each $s$).
If $n_{k-1} < j \leq n_1 + \ldots + n_{k-1}$, then,
by the above reasoning,
$$
\at_j(T) \leq \at_{n_{k-1}}(T) \leq 3 \alpha_{n_{k-1}} \leq
3 \alpha_{\lceil{4j/5\rceil}} ,
$$
since $n_{k-1} > 4(n_1 + \ldots + n_{k-1})/5$.
\end{proof}

To establish Theorems~\ref{thm:no_cotype} and \ref{thm:type},
we need to prove two lemmas.

\begin{lemma}\label{l:l_infty}
Suppose $X$ is a Banach space without non-trivial cotype,
and $E$ and $X^\prime$ are subspaces of $X$ of finite dimension and codimension,
respectively. Then, for every $n \in \N$ and $\vr > 0$, there exists a $n$-dimensional
subspace $F \hra X^\prime$, such that $d(F, \ell_\infty^n) < 1 + \vr$, and
there exists a projection $P$ from $X$ onto $F$, such that $\|P\| < 1 + \vr$, and
$P|_E = 0$.
\end{lemma}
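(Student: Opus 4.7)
The plan is to combine the Maurey--Pisier theorem, which supplies $(1+\eta)$-isomorphic copies of $\ell_\infty^N$ of arbitrary dimension inside any Banach space without non-trivial cotype, with a pigeonhole argument in the finite-dimensional space $E^*$ that selects an $n$-dimensional subspace $F$ whose coordinate functionals essentially vanish on $E$.

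First I would reduce to the case $X^\prime \cap E = \{0\}$: since $E$ is finite-dimensional, there is a bounded projection $\Pi_E \in B(X)$ onto $E$, and replacing $X^\prime$ by $X^\prime \cap \ker \Pi_E$ (still of finite codimension) removes the intersection. The reduced $X^\prime$ remains complemented in $X$ with finite-dimensional complement, so it inherits the absence of non-trivial cotype. Maurey--Pisier then supplies, for any $\eta > 0$ and arbitrarily large $N$, vectors $g_1, \dots, g_N \in X^\prime$ with $\max_i |a_i| \leq \|\sum_i a_i g_i\| \leq (1+\eta) \max_i |a_i|$. The coordinate functionals $g_i^* \in \tilde F^*$ (where $\tilde F := \span(g_1, \dots, g_N)$) have norm $\leq 1$, so Hahn--Banach produces extensions $\tilde g_i \in X^*$ with $\|\tilde g_i\| \leq 1$.

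The main combinatorial step: the restrictions $\tilde g_i|_E$ live in the unit ball of the $k$-dimensional space $E^*$ ($k = \dim E$). For $N$ large enough relative to $n$, $k$, and a parameter $\delta > 0$, a $\delta$-net argument in $B(E^*)$ forces $n$ disjoint pairs of indices $(i_s, j_s)$, $s = 1, \dots, n$, with $\|\tilde g_{i_s}|_E - \tilde g_{j_s}|_E\|_{E^*} < \delta$. Setting $f_s := g_{i_s} - g_{j_s}$ and $F := \span(f_1, \dots, f_n) \hra X^\prime$, the $\ell_\infty^N$-structure of $\tilde F$ yields $d(F, \ell_\infty^n) \leq 1 + \eta$. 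For $f = \sum_s a_s f_s$ and $e \in E$, the identity
\[
\tilde g_{i_s}(f + e) - \tilde g_{j_s}(f + e) = 2 a_s + (\tilde g_{i_s} - \tilde g_{j_s})(e)
\]
combined with $\|\tilde g_i\| \leq 1$ gives $|a_s| \leq \|f + e\| + \tfrac{\delta}{2} \|e\|$; absorbing $\|e\|$ via $\|e\| \leq \|f\| + \|f + e\|$ produces $\|f\| \leq M(\eta, \delta) \|f + e\|$, with $M(\eta, \delta) \to 1$ as $\eta, \delta \to 0$.

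Finally, since $F \cap E = \{0\}$, each coordinate functional $f_s^* \in F^*$ (of norm $\leq 1$) extends to $F + E$ via $\phi_s(f + e) := f_s^*(f)$, and the bound above gives $\|\phi_s\|_{(F+E)^*} \leq M(\eta, \delta)$; Hahn--Banach then extends $\phi_s$ to $X^*$ of the same norm while preserving $\phi_s|_E = 0$. The projection $P(x) := \sum_s \phi_s(x) f_s$ satisfies $P|_F = I_F$, $P|_E = 0$, and $\|P\| \leq (1+\eta) M(\eta, \delta)$, which for suitable $\eta$ and $\delta$ is less than $1 + \vr$. The main obstacle is the pigeonhole step: it must produce pairs whose difference kills $E$ while still acting as a nontrivial coordinate functional on the resulting $n$-dimensional subspace.
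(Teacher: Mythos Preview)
Your proposal is correct and follows the same core strategy as the paper: invoke Maurey--Pisier to place a $(1+\eta)$-copy of $\ell_\infty^N$ inside $X'$, run a pigeonhole argument in $\ball(E^*)$ on the restricted coordinate functionals to find $n$ disjoint index pairs close on $E$, and take $F$ to be the span of the differences $f_s = g_{i_s} - g_{j_s}$. Your stated ``main obstacle'' is not one: because the $2n$ indices are distinct, one has $(\tilde g_{i_s} - \tilde g_{j_s})(f_t) = 2\delta_{st}$ exactly, so $\tfrac12(\tilde g_{i_s} - \tilde g_{j_s})$ already serves as the $s$-th coordinate functional on $F$, and it is $\delta$-small on $E$ by construction.

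The one substantive difference is how the projection is produced. The paper pulls back the natural contractive projection $Q$ on $\ell_\infty^M$ onto $\span[\sigma_{p_i}-\sigma_{q_i}]$ through the extended isomorphism, obtaining a $P$ with $\|P|_E\|$ small but nonzero, and then applies a separate perturbation result (Lemma~\ref{l:small_pert}, proved via an Auerbach basis) to force $P|_E = 0$. You instead derive $\|f\| \leq M(\eta,\delta)\|f+e\|$ and use it to extend each $f_s^*$ to $F+E$ by zero on $E$ before Hahn--Banach to $X$, so $P|_E = 0$ holds from the start. Your route is more self-contained; the paper's route isolates the perturbation step as a lemma it reuses elsewhere (for the $\ell_2$ analogue, Lemma~\ref{l:l_2}). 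Incidentally, your preliminary reduction to $X' \cap E = \{0\}$ is harmless but unnecessary: the inequality $\|f\| \leq M(\eta,\delta)\|f+e\|$ already forces $F \cap E = \{0\}$.
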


\begin{lemma}\label{l:l_2}
Suppose a Banach space $X$ has Property $\pp_C$, and
$E$ and $X^\prime$ are subspaces of $X$ of finite dimension and codimension,
respectively. Then, for every $n \in \N$ and $\vr > 0$, there exists a $n$-dimensional
subspace $F \hra X^\prime$, such that $d(F, \ell_2^n) \leq C$, and there exists
a projection $P$ from $X$ onto $F$, such that $\|P\| < C^2 + \vr$, and $P|_E = 0$.
\end{lemma}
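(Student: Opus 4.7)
The plan is to apply Property $\pp_C$ to $X^\prime$ with a slightly enlarged target dimension $N := n + \dim E$, so as to produce an ambient Hilbertian subspace of $X^\prime$ inside which an orthogonal decomposition can excise the image of $E$. Property $\pp_C$ applied to $X^\prime$ and to $N$ furnishes an $N$-dimensional $G \hra X^\prime$ with $d(G,\ell_2^N) \leq C$, together with a projection $\Pi : X \to G$ of norm at most $C$. Since the Banach--Mazur infimum is attained in finite dimensions, I fix an isomorphism $T : G \to \ell_2^N$ with $\|T\| = 1$ and $\|T^{-1}\| \leq C$.

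Next I transfer the problem to $\ell_2^N$, where a genuine Hilbert structure is available. The subspace $T\Pi(E) \hra \ell_2^N$ has dimension at most $\dim E = N - n$, so its orthogonal complement $H$ has dimension at least $n$. I pick any $n$-dimensional $H_n \hra H$, set $F := T^{-1}(H_n) \hra G \hra X^\prime$, and define
$$
P := T^{-1}\, Q_n\, T\, \Pi : X \to F,
$$
where $Q_n : \ell_2^N \to H_n$ is the orthogonal projection. Immediately one gets $d(F,\ell_2^n) \leq \|T|_F\|\cdot\|(T|_F)^{-1}\| \leq C$ and $\|P\| \leq \|T^{-1}\|\|Q_n\|\|T\|\|\Pi\| \leq C^2 < C^2 + \vr$; the $\vr$ slack in the statement also comfortably absorbs any small $\delta$ one might need if one prefers to avoid invoking attainment of the infimum.

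The final verifications are routine. For $f \in F \hra G$ one has $\Pi f = f$ and $Tf \in H_n$, hence $Pf = f$, so $P$ is a projection onto $F$. For $e \in E$ the vector $T\Pi e$ lies in $T\Pi(E) \subseteq H_n^\perp = \ker Q_n$, hence $Pe = 0$. The essential step, and the only place Property $\pp_C$ is really used, is the first one: it lets me work, up to bounded distortion $C$, inside a genuine Hilbert space of dimension strictly exceeding $\dim E$, which is precisely what allows the finite-dimensional obstruction $E$ to be separated from $F$ by a uniformly bounded Hilbertian projection.
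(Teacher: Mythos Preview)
Your argument is correct and is in fact cleaner than the paper's. The paper applies Property $\pp_C$ with a much larger target dimension $mn$, where $m$ exceeds $MC^4/\delta^2$ and $M$ is the size of a $\delta C^{-2}$-net in $\ball(E)$; it then splits $\ell_2^{mn}$ into $m$ orthogonal $n$-dimensional blocks and uses a pigeonhole/averaging argument (each net point can have projection of size $\geq \delta$ on at most $C^4/\delta^2$ blocks) to locate one block on which the induced projection $P_k$ satisfies $\|P_k|_E\| < 2\delta$. Since $P_k$ does not vanish on $E$ exactly, the paper then invokes the perturbation Lemma~\ref{l:small_pert} to correct it. Your route avoids both the net/pigeonhole step and the perturbation lemma: by choosing the ambient dimension to be exactly $N = n + \dim E$ and passing to the orthogonal complement of $T\Pi(E)$ inside $\ell_2^N$, you kill $E$ on the nose in one stroke and obtain $\|P\| \leq C^2$ directly. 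The paper's approach has the advantage of being parallel to its proof of Lemma~\ref{l:l_infty} (where no Hilbertian structure is available and a genuine combinatorial/perturbation argument is needed), but for the $\ell_2$ case your direct orthogonal-complement construction is the more natural and economical argument.
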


To establish these two lemmas, we need a ``small perturbation'' result.

\begin{lemma}\label{l:small_pert}
Suppose $E$ and $F$ are subspaces of a Banach space $X$, with $\dim F = n < \infty$,
and $P$ is a projection from $X$ onto $F$, with $\|P|_E\| < \vr$ ($0 < \vr < 1/8$).
Then there exists a projection $Q$ from $X$ onto $F$, such that $Q|_E = 0$, and
$\|P - Q\| \leq 4 \|P\| n \vr$.
\end{lemma}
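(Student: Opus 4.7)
The plan is to construct $Q$ by correcting $P$ along an Auerbach basis of $F$. First I would observe that the assumption $\vr < 1/8$ forces $E \cap F = \{0\}$: for $x \in E \cap F$ one has $P(x) = x$ and $\|P(x)\| \leq \vr \|x\|$, hence $x = 0$. This allows the subsequent constructions on $E + F$ to be well-defined.

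Next, fix an Auerbach basis $(f_i)_{i=1}^n$ of $F$ with biorthogonal functionals $(f_i^*)_{i=1}^n \subset F^*$, all of norm $1$, and set $\phi_i = f_i^* \circ P \in X^*$. Then $P = \sum_i \phi_i \otimes f_i$, each $\|\phi_i\| \leq \|P\|$, and for $x \in E$ the bound $|\phi_i(x)| = |f_i^*(P(x))| \leq \|P(x)\| \leq \vr \|x\|$ gives $\|\phi_i|_E\| \leq \vr$. Any candidate $Q = \sum_i \psi_i \otimes f_i$ is a projection onto $F$ vanishing on $E$ precisely when $\psi_i|_F = f_i^*$ and $\psi_i|_E = 0$; equivalently, writing $\eta_i = \phi_i - \psi_i$, one needs $\eta_i|_F = 0$ and $\eta_i|_E = \phi_i|_E$. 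Because $E \cap F = \{0\}$, the formula $\tilde\eta_i(e+f) = \phi_i(e)$ unambiguously defines a linear functional on $E + F$.

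The main quantitative step is to bound $\|\tilde\eta_i\|_{(E+F)^*}$. For $e \in E$ and $f \in F$, the identity $f = P(e+f) - P(e)$ together with $\|P(e)\| \leq \vr \|e\|$ yields $\|f\| \leq \|P\|\,\|e+f\| + \vr\|e\|$, and feeding this into $\|e\| \leq \|e+f\| + \|f\|$ gives $\|e\| \leq \frac{1+\|P\|}{1-\vr}\|e+f\|$. Thus $|\tilde\eta_i(e+f)| \leq \vr \|e\| \leq \frac{\vr(1+\|P\|)}{1-\vr}\|e+f\|$. A Hahn--Banach extension produces $\eta_i \in X^*$ of the same norm, and I would set $Q = P - \sum_i \eta_i \otimes f_i$.

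It then remains to check that $Q$ is a projection of $X$ onto $F$ with $Q|_E = 0$ (both are immediate from the choice of $\eta_i$) and to estimate $\|P-Q\|$. Using the Auerbach inequality $\|\sum a_i f_i\| \leq \sum |a_i|$, one gets $\|(P-Q)x\| \leq \sum_i |\eta_i(x)| \leq n\cdot\max_i\|\eta_i\|\cdot\|x\|$, so $\|P-Q\| \leq n\vr\,\frac{1+\|P\|}{1-\vr}$. Since $\vr < 1/8$ and $\|P\| \geq 1$, this is at most $n\vr \cdot \frac{16}{7}\|P\| < 4 n \vr \|P\|$, as required. The only non-routine step is the estimate on $\|\tilde\eta_i\|$ above; after that, everything is bookkeeping.
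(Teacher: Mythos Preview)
Your proof is correct and follows essentially the same route as the paper: an Auerbach basis of $F$, correction functionals defined on $E+F$ that vanish on $F$ and agree with $\phi_i$ on $E$, a norm estimate for these functionals coming from a lower bound on $\|e+f\|$, Hahn--Banach extension, and a final summation over the $n$ basis vectors. The only difference is the intermediate inequality: the paper proves $\|e\|+\|f\| \leq 4\|P\|\,\|e+f\|$ and obtains $\|\eta_i\| \leq 4\|P\|\vr$ directly, whereas you prove the (slightly sharper and more simply derived) bound $\|e\| \leq \frac{1+\|P\|}{1-\vr}\|e+f\|$ and then use $\vr<1/8$, $\|P\|\geq 1$ at the end to reach the constant $4$.
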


\begin{proof}
Note first that, for any $e \in E$ and $f \in F$,
\begin{equation}
\|e+f\| \geq (\|e\| + \|f\|)/(4\|P\|)
\label{eq:dir_sum}
\end{equation}
Indeed,
$$
\|P\| \|e+f\| \geq \|P(e+f)\| \geq \|f\| - \|Pe\| \geq \|f\| - \vr \|e\| .
$$
Moreover,
$$
(1+\|P\|) \|e+f\| \geq \|I-P\| \|e+f\| \geq \|(I-P)(e+f)\| \geq (1 - \vr) \|e\| .
$$
Therefore,
$$
\eqalign{
\|e+f\|
&
=
\frac{\|P\|}{2\|P\|+1} \|e+f\| + \frac{\|P\|+1}{2\|P\|+1} \|e+f\|
\cr
&
\geq
\frac{1}{2\|P\|+1} \big( \|f\| - \vr \|e\| + (1 - \vr) \|e\| \big) =
\frac{\|f\| + (1-2\vr)\|e\|}{2\|P\|+1} \geq
\frac{3}{4} \cdot \frac{\|f\| + \|e\|}{3 \|P\|} ,
}
$$
yielding (\ref{eq:dir_sum}).

Fix an Auerbach basis $(f_i)_{i=1}^n$ in $F$. Then there exist norm $1$
elements $f_i^* \in F^*$ satisfying $\langle f_i^*, f_j \rangle = \delta_{ij}$
(Kronecker's delta). Let $x_i^* = P^* f_i^*$. Then $\|x_i^*\| \leq \|P\|$
($1 \leq i \leq n$), and, for every $x \in X$,
$Px = \sum_{i=1}^n \langle f_i^*, Px \rangle f_i =
\sum_{i=1}^n \langle x_i^*, x \rangle f_i$.
Therefore, $\|x_i^*|_E\| \leq \|P|_E\| < \vr$.
Define $y_i^* \in (E+F)^*$ by setting $x_i^*|_E = y_i^*|_E$, and
$y_i^*|_F = 0$. By (\ref{eq:dir_sum}), $\|y_i^*\| \leq 4 \|P\| \vr$.
By Hahn-Banach Theorem, there exist $z_i^* \in X^*$ ($1 \leq i \leq n$)
such that $z_i^*|_E = x_i^*|_E$, $z_i^*|_F = 0$, and $\|z_i^*\| \leq 4 \|P\| \vr$.
Then the projection $Q$, defined by
$Qx = \sum_{i=1}^n \langle x_i^* - z_i^* , x \rangle f_i$,
has the desired properties.
\end{proof}

\begin{proof}[Proof of Lemma~\ref{l:l_infty}]
Fix $n \in \N$ and $\delta \in (0, 1/8)$.
By compactness, there exists $M \in \N$ such that, for every collection
$(z_s)_{s=1}^M$ in $\ball(E^*)$, there exist $n$ pairs
$$
(p_i, q_i) \in \{1, \ldots, M\}^2 \backslash \{(1,1), \ldots, (M,M)\}
\, \, (1 \leq i \leq n) ,
$$
such that $\{p_i,q_i\} \cap \{p_j, q_j\} = \emptyset$ unless $i=j$,
and $\|z_{p_i} - z_{q_i}\| < \delta/n$ for every $i$.

By Krivine-Maurey-Pisier Theorem (see e.g.~\cite{Ma}), for every $\delta > 0$
there exists $G \hra X^\prime$ with $d(G, \ell_\infty^M) < 1 + \delta$.
Find a contraction $U : G \to \ell_\infty^M$ such that $\|U^{-1}\| < 1+\delta$, and
extend it to a contraction $\tilde{U} : X \to \ell_\infty^M$.
There exist $(x_i^*)_{i=1}^M$ in the unit ball of $X^*$ such that
$\tilde{U} x = \sum _{i=1}^M\langle x_i^*, x \rangle \sigma_i$, where
$(\sigma_i)$ is the canonical basis on $\ell_\infty^M$
(hence, $\langle x_i^*, U^{-1} \sigma_j \rangle$ equals
$1$ if $i=j$, $0$ otherwise).

By our choice of $M$, there exist disjoint pairs $(p_i,q_i)$ ($1 \leq i \leq n$)
such that, for each $i$, $\|(x^*_{p_i} - x^*_{q_i})|_E\| < \delta/n$.
Let $\tilde{F} = \span[ \sigma_{p_i} - \sigma_{q_i} : 1 \leq i \leq n] \hra \ell_\infty^M$,
and $F = U^{-1} (\tilde{F})$. Then $\tilde{F}$ is isometric to $\ell_\infty^n$,
and $d(F, \ell_\infty^n) < 1 + \delta$. Furthermore, $\tilde{F}$ is the
range of the contractive projection $Q$, defined by setting
$$
Q \sigma_j = \left\{ \begin{array}{ll}
   0                                   &   j \notin \cup_i \{p_i, q_i\}   \cr
   (\sigma_{p_i} - \sigma_{q_i})/2     &   j = p_i   \cr
   - (\sigma_{p_i} - \sigma_{q_i})/2   &   j = q_i
\end{array} \right. .
$$
Then $P = U^{-1} Q \tilde{U}$ is a projection onto $F$, with $\|P\| < 1 + \delta$.
Moreover,
$$
Px = \frac{1}{2} \sum_{i=1}^n \langle x_{p_i}^* - x_{q_i}^* , x \rangle
U^{-1} (\sigma_{p_i} - \sigma_{q_i})
$$
for $x \in X$, hence $\|P|_E\| < \delta$. As $\delta > 0$ can be chosen to be
arbitrarily small, an application of Lemma~\ref{l:small_pert} completes the proof.
\end{proof}

\begin{proof}[Proof of Lemma~\ref{l:l_2}]
Fix $n \in \N$ and $\delta \in (0, 1/8)$.
Select a $\delta C^{-2}$-net $(e_i)_{i=1}^M$ in $\ball(E)$.
Pick $m > M C^4/\delta^2$. Find $G \hra X^\prime$, which is $C$-isomorphic to $\ell_2^{mn}$,
and $C$-complemented in $X$. Consider a contraction $U : G \to \ell_2^{mn}$ such that
$\|U^{-1}\| \leq C$, and a projection $P$ from $X$ onto $G$, with $\|P\| \leq C$.
Denote by $(\sigma_j)_{j=1}^{mn}$ the canonical basis for $\ell_2^{nm}$, and
let $Q_k$ ($1 \leq k \leq m$) be the orthogonal projection from $\ell_2^{nm}$ onto
$\span[\sigma_i : (k-1)n + 1 \leq i \leq kn]$. Then, for every $k$,
$P_k = U^{-1} Q_k U P$ is a projection of norm not exceeding $C^2$, whose range is
$C$-isomorphic to $\ell_2^n$. We claim that there exists $k$ such that
$\|P_k e\| < 2 \delta$ for any $e \in \ball(E)$. Once the existence of
such $k$ is established, we can complete the proof by applying
Lemma~\ref{l:small_pert}  to $P_k$ and
$F_k = \ran P_k = U^{-1} \span[\sigma_i : (k-1)n + 1 \leq i \leq kn]$.

Note that, if $x_k \in \ran P_k$ for $1 \leq k \leq m$, then
$$
\|\sum_{k=1}^m x_k\| \geq \|\sum_k U x_k\| =
\Big( \sum_{k=1}^m \|U x_k\|^2 \Big)^{1/2} \geq
C^{-1} \Big( \sum_{k=1}^m \|x_k\|^2 \Big)^{1/2} .
$$
Thus, for any $x \in \ball(X)$,
$C^4 \geq C^2 \|P x\|^2 \geq \sum_{k=1}^m \|P_k x\|^2$,
hence $\|P_k x\| \geq \delta$ for at most $C^4/\delta^2$ values of $k$.
As $m > M C^4/\delta^2$, there exists $k$ such that $\|P_k e_i\| < \delta$
for every $i \in \{1, \ldots, M\}$. For every $e \in \ball(E)$,
find $i$ such that $\|e - e_i\| < \delta/(4 C^2)$. Then
$$
\|P_k e\| \leq \|P_k e_i\| + \|P_k\| \|e - e_i\| <
\delta + C^2 \cdot \frac{\delta}{C^2} < 2 \delta ,
$$
as desired.
\end{proof}

\begin{proof}[Proof of Theorem~\ref{thm:no_cotype}]
By Lemma~\ref{l:convex}, it suffices to show that, for any
convex sequence $\alpha_i \searrow 0$, there exists $T \in B(X,Y)$ satisfying
$$
4 \alpha_{\lceil{4m/5}\rceil} \geq \asa_m(T) \geq c_m(T) \geq \frac{49}{1100} \alpha_{9m} .
$$

Find a sequence $0 = n_0 < n_1 < n_2 < \ldots$ such that, for each $k$,
$\alpha_{n_k} \leq \alpha_{5(n_{k-1}+1)}/5$, and $n_k > 5 (n_{k-1} + 1)$.
Find sequences of subspaces $E_k  \hra X$ and $F_k \hra Y$ in such a way that:
\begin{enumerate}
\item
There exist contractions $U_k : E_k \to \ell_\infty^{n_k}$ and
$V_k : \ell_1^{n_k} \to F_k$ such that their inverses have norms less than $2^{1/4}$.
\item
For each $k$, there exists a projection $P_k$ onto $E_k$ such that $\|P_k\| < 2^{1/4}$,
and $P_k|_{E_j} = 0$ for $j \neq k$ (in other words, $P_j P_k = 0$ if $j \neq k$).
\end{enumerate}
The existence of $(F_k)$ follows from the fact that $Y$ has no non-trivial type
\cite{Ma}. Select $(E_k)$ inductively. Select $E_1$ to be
arbitrary, subject to the estimate on $d(E_1, \ell_\infty^{n_1})$. Now suppose
$E_1, \ldots, E_{k-1}, P_1, \ldots, P_{k-1}$ have already been defined.
By Lemma~\ref{l:l_infty}, there exists $E_k \hra \cap_{j=1}^{k-1} \ker P_j$,
and a projection $P_k$ onto it, such that $d(E_k, \ell_\infty^{n_k}) < 2^{1/4}$,
$\|P_k\| < 2^{1/4}$, and $P_k|_{E_j} = 0$ for any $j < k$.

For $1 \leq i \leq n_k$, set
$\beta_{ik} = \alpha_{i + 2 n_{k-1}} - \alpha_{i + 2 n_{k-1} + 1}$.
By the convexity of $(\alpha_i)$,
$\beta_{1k} \geq \beta_{2k} \geq \ldots \geq \beta_{n_k k}$.
Let $D_k = \diag(\beta_{ik})$ be the diagonal map from
$\ell_\infty^{n_k}$ to $\ell_1^{n_k}$, and set $S_k = V_k D_k U_k$
(we can view $S_k$ as a map into $Y$).
We claim that the operator $T = \sum_j S_j P_j$ has the desired properties.
To this end, recall that (see e.g.~\cite[Section 11.11]{PieID}),
for a diagonal operator $D = \diag(d_i) \in B(\ell_\infty^n, \ell_1^n)$,
$a_m(D) = c_m(D) = \sum_{i=m}^n d_i$ (here, we are assuming that
$m \leq n$, and $d_1 \geq d_2 \geq \ldots \geq d_n$). Furthermore,
for any ideal $\A$, $\|D\|_{\A} \leq \sum_i d_i$ (to see this, represent $D$
as a sum of rank $1$ diagonal operators), hence
$c_m(D) = \asa_m(D) = \sum_{i=m}^n d_i$.

To estimate $c_m(T)$ from below, find $k$ such that
$n_{k-1} < m \leq n_k$. By the injectivity of $\A$,
$$
c_m(T) \geq c_m(T|_{E_k}) = c_m(S_k) \geq 2^{-1/2} \csa_m(D_k) \geq 2^{-1/2} c_m(D_k) .
$$
As noted above, 
$c_m(D_k) = \sum_{i=m}^{n_k} \beta_{ik} =
\alpha_{m+2n_{k-1}} - \alpha_{n_k + 2 n_{k-1} + 1}$.
As in the proof of Theorem~\ref{thm:2sum},
let $m_k$ be the largest number $m \leq n_k$ for which
$\alpha_{m+2n_{k-1}} \geq 1.1 \alpha_{n_k + 2 n_{k-1} + 1}$.
If $m \leq m_k$, then $c_m(D_k) \geq 0.1 \alpha_{m+2n_{k-1}} \geq 0.1 \alpha_{3m}$,
hence $c_m(T) \geq 0.07 \alpha_{3m}$. For $m > m_k$, recall that
$n_k+1 \leq m_{k+1}$, hence
$c_m(T) \geq c_{n_k+1}(T) \geq 0.07 \alpha_{3n_k+1}$.
If $m > n_k/3$, this yields $c_m(T) \geq 0.07 \alpha_{9m}$.
If $m_k < m \leq n_k/3$, \eqref{eq:times3} implies
$\alpha_{3 n_k + 1} \geq 0.7 \alpha_{n_k + 1}$.
Therefore,
$$
c_m(T) \geq c_{n_k+1}(T) \geq 0.07 \alpha_{3n_k+1} \geq
\frac{7^2}{1000} \alpha_{n_k + 2 n_{k-1} + 1} \geq
\frac{49}{1100} \alpha_{m_k + 2 n_{k-1} + 1} \geq \frac{49}{1100} \alpha_{3m} .
$$

Next estimate $\asa_m(T)$ from above. Suppose $n_1 + \ldots + n_{k-1} < m \leq n_k$.
Then
$$
\asa_m(T) \leq \asa_{m-(n_1+\ldots+n_{k-1})}(S_k P_k) +
\sum_{j=k}^\infty \|S_{j+1} P_{j+1}\|_{\mathcal{A}} \leq
2^{3/4} \big( \asa_\ell(D_k) + \sum_{j=k}^\infty \|D_{j+1}\|_\A \big) ,
$$
where $\ell = m-(n_1+\ldots+n_{k-1})$.
But $\|D_{j+1}\|_\A \leq \alpha_{n_j + 2 n_{j-1} + 1} \leq \alpha_{n_j + 1}$.
Moreover, $\alpha_{n_{s+1}+1} \leq \alpha_{n_s+1}/5$ for any $s$, hence
$\sum_{j=k+1}^\infty \|D_j\|_\A \leq 5 \alpha_{n_k+1}/4 \leq 5 \alpha_m/4$.
As noted previously,
$$
\asa_\ell(D_k) = \sum_{i=\ell}^{n_k} \beta_{i n_k} \leq
\alpha_{m + n_{k-1} - n_{k-2} - \ldots - n_1} \leq \alpha_m .
$$
Therefore, $\asa_m(T) \leq 2^{3/4} ( \alpha_m + 5 \alpha_m/4) \leq 4 \alpha_m$.

Now suppose $n_{k-1} < m \leq n_1 + \ldots + n_{k-1}$. Then
$\asa_m(T) \leq \asa_{n_{k-1}+1}(T) \leq 4 \alpha_{n_{k-1}+1}$.
As $m \leq 5 n_{k-1}/4$, we conclude that
$\asa_m(T) \leq 4 \alpha_{\lceil 4m/5 \rceil}$.
\end{proof}

\begin{proof}[Proof of Theorem~\ref{thm:type}]
The proof is very similar to the proof of Theorem~\ref{thm:no_cotype}.
Suppose $X$ has Property $\pp_C$.
By Lemma~\ref{l:convex}, it suffices to show that, for any convex sequence
$\alpha_i \searrow 0$, there exists an operator $T \in B(X,Y)$ such that
$$
\frac{1}{10C^2} \alpha_{9m} \leq \css_m(T) \leq \ass_m(T) \leq
4 \alpha_{\lceil 4m/5 \rceil}
$$
To this end, pick $C_1 \in (C^2, 70 C^2/66)$.
Find a sequence $0 = n_0 < n_1 < n_2 < \ldots$ such that, for each $k$,
$\alpha_{n_k} \leq \alpha_{5(n_{k-1}+1)}/5$, and $n_k > 5 (n_{k-1} + 1)$.
Find sequences of subspaces $E_k \hra X$ and $F_k \hra Y$ in such a way that:
\begin{enumerate}
\item
There exist contractions $U_k : E_k \to \ell_2^{n_k}$ and
$V_k : \ell_2^{n_k} \to F_k$, such that $\|U_k^{-1}\| \leq C$, and $\|V_k^{-1}\| < 2$.
\item
For each $k$, there exists a projection $P_k$ onto $E_k$ such that $\|P_k\| \leq C_1$,
and $P_k P_j = 0$ for $k \neq j$. 
\end{enumerate}
The existence of $(F_k)$ follows from Dvoretzky's theorem.
Select $(E_k)$ inductively. Pick an arbitrary $E_1$, satisfying
$d(E_1, \ell_2^{n_1}) \leq C_1$.Now suppose
$E_1, \ldots, E_{k-1}, P_1, \ldots, P_{k-1}$ have already been defined.
By Lemma~\ref{l:l_2}, there exists $E_k \hra \cap_{j=1}^{k-1} \ker P_j$,
and a projection $P_k$ onto it, such that $d(E_k, \ell_2^{n_k}) \leq C_1$,
$\|P_k\| \leq C_1$, and $P_k|_{E_j} = 0$ for any $j < k$.

For $1 \leq i \leq n_k$, let
$\beta_{ik} = \big(\alpha_{i+2n_{k-1}}^q - \alpha_{n_k+2n_{k-1}+1}^q\big)^{1/q}$,
where $1/q = 1/2 - 1/r + 1/t$.
By convexity, $\beta_{1k} \geq \beta_{2k} \geq \ldots \geq \beta_{n_k k}$.
Let $D_k = \diag(\beta_{ik})$ be the diagonal map on $\ell_2^{n_k}$, and set
$S_k = V_k D_k U_k$ (we can view $S_k$ as a map into $Y$).
We claim that the operator $T = \sum_j S_j P_j$ has the desired properties.

We rely on a result of Mitiagin \cite[Theorem 11.9]{T-J}:
for an operator $u$ on a Hilbert space,
$\|u\|_q \leq \pi_{t,r}(u) \leq {\mathfrak{a}}^{-1} \|u\|_q$,
where ${\mathfrak{a}} = \sqrt{2/\pi}$
is the first absolute Gaussian moment.

First estimate $\css_m(T)$ from below. For a fixed $m$, find $k$ such that
$n_{k-1} < m \leq n_k$. By the injectivity of $\Pi_{t,r}$,
$\css_m(T) \geq \css_m(T|_{E_k}) = \css_m(S_k) \geq \css_m(D_k)/(2C_1)$, and
$$
\css_m(D_k) = \inf_{\codim H < m} \pi_{tr}(D_k|_H) \geq
\inf_{\codim H < m} \|D_k|_H\|_q
$$
By \cite{GK},
$$
\inf_{\codim H < m} \|D_k|_H\|_q^q = \|\diag(\beta_{ik})_{i=m}^{n_k}\|_q^q =
\alpha_{m+2n_{k-1}}^q - \alpha_{n_k+2n_{k-1}+1}^q
$$
Let $m_k$ be the largest value of $m \leq n_k$ for which
$\alpha_{m + 2 n_{k-1}} \geq 1.1 \alpha_{n_k + 2 n_{k-1} + 1}$.
Emulate the proof of Theorems~\ref{thm:2sum}.
More precisely: if $n_{k-1} < m \leq m_k$, we have
$$
\css_m(D_k) \geq \big( 1 - (10/11)^q \big)^{1/q} \alpha_{m + 2 n_{k-1} + 1} \geq
\alpha_{3m}/3 ,
$$
hence $\css_m(T) \geq \alpha_{3m}/(6 C_1)$.
If $m > n_k/3$, we conclude that
$$
\css_m(T) \geq \css_{n_k+1}(T) \geq \alpha_{3 n_k + 1}/(6 C_1) \geq \alpha_{9m}/(6 C_1) .
$$
If $m_k < m \leq n_k/3$, \eqref{eq:times3} yields
$\alpha_{3 n_k + 1} > 0.7 \alpha_{n_k + 1}$, and therefore,
$$
6 C_1 \css_m(T) \geq 6 C_1 \css_{n_k+1}(T) \geq \alpha_{3 n_k + 1} \geq
\frac{7}{10} \alpha_{n_k + 1} \geq \frac{7}{10 \cdot 1.1} \alpha_{m_k + 2 n_{k-1} + 1} \geq
\frac{7}{11} \alpha_{3m} .
$$

Next estimate $\ass_m(T)$ from above. If
$n_1 + \ldots + n_{k-1} < m \leq n_k$, we obtain
$$
\ass_m(T) \leq \ass_{m-(n_1+\ldots+n_{k-1})}(S_k P_k) +
\sum_{j=k+1}^\infty \pi_{tr}(S_j P_j) \leq
C_1 \big( \ass_\ell(D_k) + \sum_{j=k+1}^\infty \pi_{tr}(D_j) \big) ,
$$
where $\ell = m-(n_1+\ldots+n_{k-1})$.
But, for $j \geq k$,
$$
{\mathfrak{a}} \pi_{tr}(D_{j+1}) \leq \|D_{j+1}\|_q \leq
\alpha_{n_j + 2 n_{j-1} + 1} \leq \alpha_{n_j + 1} \leq 5^{k-j} \alpha_m.
$$
Furthermore,
$$
\inf_{\rank u < \ell} \|D_k - u\|_q^q =
\| \diag_{i=\ell}^{n_k} (\beta_{ik})\|_q^q =
\sum_{i=\ell}^{n_k} \beta_{ik}^q \leq
\alpha_{\ell+2 n_k}^q \leq \alpha_m^q ,
$$
hence $\ass_m(D_k) \leq {\mathfrak{a}}^{-1} \alpha_m$. Therefore,
$\ass_m(T) \leq {\mathfrak{a}}^{-1} \alpha_m(1 + \sum_{s=0}^\infty 5^{-s})
\leq 4 \alpha_m$.

For $n_{k-1} < m \leq n_1 + \ldots + n_{k-1}$, we have
$\ass_m(T) \leq \ass_{n_{k-1}}(T) \leq 4 \alpha_{n_{k-1}} \leq
4 \alpha_{\lceil{4m/5}\rceil}$.
\end{proof}

{\noindent \bf Acknowledgments}.
I would like to thank A.~Aksoy and J.~M.~Almira for introducing me to the topic of ``lethargy,''
and for many stimulating conversations.


\end{document}